\date{}
\newtheorem{theorem}{Theorem}[section]
\newtheorem*{theorem*}{Theorem}
\newtheorem{lemma}[theorem]{Lemma}
\newtheorem{proposition}[theorem]{Proposition}
\theoremstyle{definition}
\newtheorem{remark}[theorem]{Remark}
\theoremstyle{plain}
\newcommand{\N}{\mathbb{N}}
\newcommand{\R}{\mathbb{R}}
\newcommand{\PSD}{\mathcal{H}_d^+}
\newcommand{\sym}{\mathcal{H}_d}
\newcommand{\supp}{{\rm Supp\, }}
\newcommand{\E}{\mathbb{E}}
\newcommand{\gen}{\mathcal{L}}
\newcommand{\Var}{{\rm Var}}
\newcommand{\Tr}{{\rm Tr}}
\newcommand{\Dir}{\mathcal{E}}
\newcommand{\LL}{\rm{L}^2(\mu)}
\newcommand{\DD}{\mathcal{D}(\mathcal{L})}
\def\Prob{{\mathbb P}}
\newcommand{\w}{\widetilde}
\newcommand{\efface}[1]{}
\begin{document}

\author{Richard Aoun}
\author{Marwa Banna}
\author{Pierre Youssef}
\title{Matrix Poincar\'e inequalities and concentration}

\maketitle

\begin{abstract}
We  show that any probability measure satisfying a Matrix Poincar\'e inequality 
with respect to some reversible Markov generator satisfies an exponential matrix concentration inequality  depending on the associated matrix carr\'e du champ operator.   This extends to the matrix setting a  classical phenomenon in the scalar case. Moreover, the proof gives rise to new matrix trace inequalities which could be of independent interest. We then apply this general fact by establishing   matrix Poincar\'{e} inequalities to  derive matrix concentration inequalities   for   Gaussian measures, product measures and  for   Strong Rayleigh measures.  The latter      represents the first instance of matrix concentration for general matrix functions of negatively dependent random variables. 
\end{abstract}

\section{Introduction and main results}

Concentration inequalities are versatile tools which found use in several pure and applied mathematical problems. 
While in their essence, these inequalities are just a quantification of the law of large numbers, they represent further illustrations of deep high dimensional phenomena in areas across Mathematics. 
On a conceptual level, they assert that a random variable measurable with respect to a large number of independent (or weakly dependent) random variables and ``depending little'' on each individually, is almost constant with high probability. While many methods were developed to prove concentration inequalities, perhaps the most insightful one is the one based on functional inequalities such as Poincar\'e and log-Sobolev inequalities. Indeed, these functional inequalities serve as a further illustration of the conceptual description we mentioned above. For instance, in its classical form, Poincar\'e inequality relates the variance of a function of a random variable to the average length of the gradient of the function. One then readily sees that if a function varies little locally (in the usual  sense of variations), then with high probability it also varies little when evaluated at a random point. More generally, these inequalities relate statistics of the measure, such as the variance and entropy of any function, to the derivative along a semi-group associated with a Markov process generating the measure. Beside the concentration phenomenon, such functional inequalities provide a further understanding of the measure as they are intimately connected to the convergence rate of the corresponding Markov process generating it.  The interconnection between concentration and functional inequalities is by now very well understood, and the use of such inequalities to derive concentration proved to be very powerful due to its flexibility in dealing with any measure, provided one can architect a suitable Markov process which generates it. We refer to \cite{BLM-book} and \cite{ledoux} for more on classical concentration inequalities.

Matrix concentration inequalities are noncommutative extensions of their scalar counterpart and have been extensively developed in the last decade \cite{AW, Oliveira, PMT-EffronStein, Tropp-martingale, Tropp-con}.  A big effort was made to transfer our understanding of the scalar case to the matrix one. In this direction, many papers were devoted to extending scalar methods for deriving concentration inequalities in the matrix setting.  We refer to the book of Tropp \cite{Tropp-survey} for a detailed introduction to the subject and an extensive list of references. 
As mentioned in the previous paragraph, the approach leading to concentration inequalities based on functional inequalities has been successful in the scalar case: it allowed to establish concentration inequalities in dependent settings and treat general functions beyond the example of sums of random variables. 
In light of this, several efforts were made to extend this theory to the matrix setting.  
Several papers were devoted to properly defining the matrix entropy and establishing its basic properties such as the subadditivity \cite{Carlen, Chen-Tropp, Ch-Hs, Ch-Hs-new, Ch-Hs-17}. In \cite{Chen-Tropp}, Chen and Tropp aimed at extending $\phi$-Sobolev inequalities to the matrix setting. 
In \cite{Ch-Hs, Ch-Hs-new}, Cheng and Hsieh further investigated the notion of $\Phi$-entropy for operator valued functions and established several matrix functional inequalities such as a matrix analogue of the Efron-Stein inequality. 
The subadditivity of matrix entropy was used in \cite{Chen-Tropp} with the aim of developing a matrix version of the entropy method. 
However, as was noted in \cite{Chen-Tropp}, the attempt to adapt the Herbst's argument had some shortcomings requiring additional assumptions to be successfully implemented. In addition to \cite{Chen-Tropp}, we should mention the paper \cite{PMT-EffronStein} where a family of exponential matrix Efron-Stein inequalities are established and turned into matrix concentration. However,  the literature lacks a unified general framework for relating  matrix functional inequalities to matrix concentration.

In this paper, we focus on Poincar\'e inequalities and aim to implement a general procedure turning a {\it matrix} Poincar\'e inequality into a concentration inequality. In the scalar case, such a procedure  was first introduced by Gromov and Milman  \cite{Gro-Mil}
  and alternative arguments were later developed. We will adapt here the approach of Aida and Stroock \cite{AS} (see also  \cite[Section 2.5]{Ledoux-lecturenotes}). One of our contributions is to elaborate such a procedure, then  establish matrix Poincar\'e  inequalities and use this  to derive new matrix concentration inequalities for Gaussian measures and negatively dependent measures.

\subsection*{\bf Matrix Poincar\'e implies matrix concentration}
Let us denote by $\sym$ (resp.~$\PSD$) the set of $d\times d$ Hermitian (resp.~positive semi-definite)  matrices. Given a probability measure $\mu$ on some Polish space $\Omega$ and $f:\Omega\to \sym$ whose matrix coefficients belong to $\LL$, the variance of $f$ is given by 
$$
\Var_\mu(f)= \E_\mu[f^2]-(\E_\mu f)^2, 
$$
where $\E_\mu:=\int f d\mu$. 
It can be easily checked that $\Var_\mu(f)\succeq 0$, where $\succeq$ refers to the positive semi-definite ordering. 
We will say that $\mu$ satisfies a \textit{matrix Poincar\'e inequality} with constant $\alpha$ and \textit{matrix Markov generator} $\gen$ with Dirichlet domain $\DD$ if for any $f\in \DD$ we have 
$$
\Var_\mu(f) \preceq \alpha \Dir(f),
$$
where $\Dir(f)=-\E_\mu[f\gen f]$ is the \textit{matrix Dirichlet form} associated with $\gen$. As is verified in Proposition \ref{prop: dirichlet properties}, this definition makes sense as $\Dir(f)\in \PSD$.   The notion of matrix Poincar\'e  inequality (with respect to the positive semi-definite ordering) 
appears in the works of Chen and Hsieh \cite{Ch-Hs, Ch-Hs-new} although not expressed in the language of semigroups. Together with Tomamichel \cite{Ch-Hs-17}, the aforementioned authors later developed the basic definitions and properties of semigroups acting on matrix functions, as well as the matrix Dirichlet form and matrix carr\'e du champ operator, which we use here. These notions will be recalled in Section \ref{sec: matrix generator} for completeness, and can be thought of at this stage as natural extensions of their scalar counterpart by considering the action of the Markov generator on each entry of the matrix valued function. Let $\Gamma$ be the matrix carr\'{e} du champ operator associated to the matrix generator $\gen$ defined on an algebra $\mathcal{A}$ of $\DD$.  Our first main result states as follows.

\begin{theorem}\label{th: poincare to concentration}
Let $\mu$ be a probability measure on some Polish space $\Omega$. Suppose that $\mu$ satisfies a matrix Poincar\'e inequality with constant $\alpha$ and matrix Markov generator $\gen$ reversible with respect to $\mu$.  Then  for any  $f\in  \mathcal{A}$ and       $t\geq 0$, we have 
$$
\mu\Big( \lambda_{max}\big(f-\E_\mu f\big)\geq t\Big)\leq 
d\exp\bigg(-\frac{t^2}{2\alpha v_f+ t\sqrt{2\alpha v_f}}\bigg),
$$
where $v_f= \big\Vert \Vert \Gamma(f)\Vert\big\Vert_{L_\infty}$.
\end{theorem}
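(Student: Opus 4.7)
I would adapt the scalar Aida--Stroock argument \cite{AS} to the matrix setting. The strategy is to apply the matrix Poincar\'e inequality to $g_\theta := e^{\theta(f-\E_\mu f)/2}$, close the resulting estimate through a chain-rule type control of the carr\'e du champ of the matrix exponential, iterate the outcome dyadically to produce a bound on a matrix Laplace transform, and conclude via the standard matrix Chernoff method.

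After recentering we may assume $\E_\mu f = 0$. Set $g_\theta := e^{\theta f/2}$ and introduce the matrix Laplace transform $\nu(\theta) := \lambda_{max}(\E_\mu e^{\theta f})$. Applying the matrix Poincar\'e inequality to $g_\theta$ gives $\E_\mu[e^{\theta f}] \preceq (\E_\mu g_\theta)^2 + \alpha\,\Dir(g_\theta)$. Taking $\lambda_{max}$ on both sides, using its subadditivity on Hermitian matrices together with $\lambda_{max}(A^2) = \lambda_{max}(A)^2$ for $A \in \PSD$, yields
$$\nu(\theta) \leq \nu(\theta/2)^2 + \alpha\,\lambda_{max}\bigl(\Dir(g_\theta)\bigr).$$

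The technical heart of the proof is a \emph{matrix chain-rule} bound of the form
$$\Dir\bigl(e^{\theta f/2}\bigr) \preceq \tfrac{\theta^2}{4}\,\E_\mu\bigl[e^{\theta f/2}\,\Gamma(f)\,e^{\theta f/2}\bigr].$$
In the scalar case this follows instantly from the pointwise identity $\Gamma(\phi(f)) = \phi'(f)^2\Gamma(f)$; in the matrix case, the noncommutativity of $f$ and $\Gamma(f)$ forbids any pointwise identity, and the inequality must be extracted from an integral representation of the matrix exponential (e.g.\ Duhamel $e^A - e^B = \int_0^1 e^{sA}(A-B)e^{(1-s)B}\,ds$) combined with the bilinearity of $\Gamma$ and Lieb-type concavity of trace functionals of matrix exponentials. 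This is the main obstacle, and presumably corresponds to the ``new matrix trace inequalities'' promised in the abstract. Combined with $\Gamma(f)\preceq v_f I$ and the operator inequality $g_\theta \Gamma(f) g_\theta \preceq v_f e^{\theta f}$, it delivers $\alpha\,\lambda_{max}(\Dir(g_\theta)) \leq \tfrac{\alpha v_f \theta^2}{4}\,\nu(\theta)$, whence the self-bound
$$\nu(\theta)\Bigl(1 - \tfrac{\alpha v_f \theta^2}{4}\Bigr) \leq \nu(\theta/2)^2,\qquad \theta \in \bigl[0,\, 2/\sqrt{\alpha v_f}\bigr).$$

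Because $\nu(0)=1$ and $\log \nu(\eps) = O(\eps^2)$ as $\eps \to 0$ (the linear term vanishing by centering), iterating this dyadically along $\theta, \theta/2, \theta/4, \dots$ and letting the iteration depth tend to infinity produces a convergent geometric bound
$$\log \nu(\theta) \leq \sum_{k=0}^\infty 2^k \log\frac{1}{1-\alpha v_f \theta^2/4^{k+1}} \leq \frac{\alpha v_f \theta^2}{2\bigl(1 - \sqrt{\alpha v_f/2}\,\theta\bigr)},\qquad \theta \in \bigl[0,\, \sqrt{2/(\alpha v_f)}\bigr),$$
where the last step uses $\alpha v_f \theta^2/4 \leq \sqrt{\alpha v_f/2}\,\theta$ on that range. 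This is a Bernstein-type MGF estimate. Feeding it into the matrix Chernoff inequality $\mu(\lambda_{max}(f)\geq t) \leq e^{-\theta t}\E_\mu \Tr e^{\theta f} \leq d\,e^{-\theta t}\,\nu(\theta)$ and optimizing at $\theta^* = t/(\alpha v_f + \sqrt{\alpha v_f/2}\,t)$ produces exactly the announced tail $d\exp\bigl(-t^2/(2\alpha v_f + t\sqrt{2\alpha v_f})\bigr)$, with the prefactor $d$ arising from $\Tr I = d$ in the reduction from $\nu$ to the trace Laplace transform.
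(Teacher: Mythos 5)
Your overall architecture is the right one (apply the matrix Poincar\'e inequality to $e^{\theta f/2}$, close via a Dirichlet-form/Laplace-transform bound, iterate dyadically, Chernoff), and it matches the paper's. The gap is in the step you flag as the ``technical heart'': the claimed operator-order chain rule
$$\Dir\bigl(e^{\theta f/2}\bigr)\ \preceq\ \tfrac{\theta^2}{4}\,\E_\mu\bigl[e^{\theta f/2}\,\Gamma(f)\,e^{\theta f/2}\bigr]$$
is \emph{false} in the PSD order, and this failure is exactly why a ``new matrix trace inequality'' is needed rather than an operator inequality. In the infinitesimal limit defining $\Gamma(e^g)$, your claim requires
$\bigl(\int_0^1 e^{sM}He^{(1-s)M}\,ds\bigr)^2\preceq e^{M}H^2 e^{M}$
for Hermitian $M$ and perturbation $H$. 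Take $M=\mathrm{diag}(1,-1)$ and $H=\begin{pmatrix}0&1\\1&0\end{pmatrix}$. Then $\int_0^1 e^{sM}He^{(1-s)M}\,ds=\sinh(1)\,H$, whose square is $\sinh^2(1)\,I\approx 1.38\,I$, while $e^{M}H^2e^{M}=e^{2M}=\mathrm{diag}(e^2,e^{-2})$, whose bottom eigenvalue is $e^{-2}\approx 0.14$. Since $1.38\not\le 0.14$, the operator inequality fails. The same calculation shows that no single-sided conjugation $e^g(\cdot)e^g$ can dominate $\Gamma(e^g)$, because the semigroup perturbation mixes eigenspaces of $e^g$; this is the genuine obstruction noncommutativity creates.

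The paper sidesteps this by never asserting an operator-order bound on $\Dir(e^g)$: the relevant statement (Theorem~\ref{th: dir-laplace}) is the \emph{trace} inequality $\Tr\bigl[(\Dir(e^g))^p\bigr]\le \bigl\|\,\|\Gamma(g)\|^p\bigr\|_{L_\infty}\Tr\bigl(\E_\mu[e^{2pg}]\bigr)$, obtained by symmetrizing $e^A-e^B$ via the Hiai--Kosaki integral representation (Theorem~\ref{th: hiai-kosaki}) so that the two conjugations $e^A(\cdot)e^A$ and $(\cdot)e^{2B}(\cdot)$ appear with equal weight, then exploiting unitary invariance and operator concavity (Hansen--Pedersen). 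Because only a trace-level control is available, one can no longer iterate on $\lambda_{\max}(\E_\mu e^{\theta f})$ as you do; instead the paper runs the dyadic recursion on $\Tr\bigl[(\E_\mu e^{\theta f/2^s})^{2^s}\bigr]$, using operator monotonicity of $\Tr[(\cdot)^p]$ to pass the Poincar\'e order inequality through the $p$-th power, together with a weighted trace-convexity lemma (Lemma~\ref{lem: weighted convexity}) to absorb the error term. Your $\lambda_{\max}$-based recursion, even if one tried to repair it by converting the trace inequality into a $\lambda_{\max}$ bound, picks up a dimension factor $d$ at each step and diverges. So the gap is not cosmetic: the passage to traces of dyadic powers is forced, and your operator-order shortcut does not close.
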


 The above theorem provides a  general machinery turning a matrix Poincar\'e inequality into a corresponding matrix concentration inequality.  
In view of such general phenomenon paralleling its scalar counterpart, establishing a matrix concentration inequality is reduced to proving a matrix Poincar\'e inequality. To this aim, for a given probability measure, the main task lies in designing the appropriate Markov generator and calculating the corresponding matrix carr\'e du champ operator.

The proof of Theorem~\ref{th: poincare to concentration} hides many challenging obstacles arising because of noncommutativity. As is customary,  obtaining a concentration inequality follows by combining a Chernoff bound with an estimate on the Laplace transform. 
The Poincar\'e inequality is then used to obtain a recursive relation involving the Laplace transform, which when properly arranged produces the desired bound on the Laplace transform. This simple looking procedure carries a great amount of difficulties when one attempts to extend it to the matrix setting. For instance, given $g:\Omega\to \sym$, one starts applying the matrix Poincar\'e inequality to $e^{g}$ to get 
$$
\E_\mu[e^{2g}]\preceq (\E_\mu e^g)^2 + \alpha \Dir(e^g). 
$$
In the scalar case, $\Dir(e^g)$ can be easily related to the Laplace transform which would automatically translates the above relation into a recursive formula on the Laplace transform. Such a relation is far from trivial in the matrix setting and requires the development of new matrix trace inequalities which could be of independent interest. 
Such a relation is established in Section~\ref{sec: matrix trace ineq} (see Theorem~\ref{th: dir-laplace}) where, in particular, a new matrix trace inequality is elaborated (see Theorem~\ref{th: mean value}). While obtaining a recursive formula is the end of the story in the scalar case, such a relation cannot be directly iterated in the matrix setting. Indeed, since the square function is not operator monotone, one cannot reapply the same procedure to bound $(\E e^g)^2$ in the above formula. 
To overcome this issue, we exploit the operator monotonicity of the trace of such functions and combine it with special convexity arguments to implement the iterative procedure.

\subsection*{\bf Matrix Poincar\'e and Concentration for product measures}
We derive a matrix Poincar\'e inequality for the standard Gaussian measure and use the mechanism in Theorem~\ref{th: poincare to concentration} to deduce corresponding concentration.  
To this aim, we consider the Ornstein-Uhlenbeck Markov process whose matrix Dirichlet form is precisely the expectation of the sum of the squared partial derivatives matrices.  In this case, we obtain the following matrix Gaussian Poincar\'e inequality. 

\begin{theorem}\label{th: Gaussian-Poincare}
Let $\mu= \mu_1 \otimes \ldots \otimes \mu_n$  be the standard  Gaussian measure on $\R^n$. Let $f: \R^n\to \sym$ be such that all of its matrix coefficients, together with their partial derivatives,  are smooth and in $\LL$. Then 
\[
\Var_\mu (f) \preceq  \int \sum_{i=1}^n (\partial_i f)^2 d\mu,
\]
where $\partial_i f (x_1, \ldots, x_n):= \frac{\partial f }{\partial x_i}  (x_1, \ldots, x_n)$ is the matrix whose entries are the $i$-th partial derivatives of the corresponding entries of $f$. 
\end{theorem}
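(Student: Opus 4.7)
The plan is to adapt the classical semigroup proof of the Gaussian Poincar\'e inequality to the matrix setting, using operator convexity of the square function to handle the noncommutativity. The natural choice of generator is the Ornstein--Uhlenbeck operator $\gen f = \Delta f - x\cdot \nabla f$ acting entrywise, with associated semigroup given by the Mehler formula
$$
P_t f(x) = \int f\!\bigl(e^{-t}x + \sqrt{1-e^{-2t}}\,y\bigr)\, d\mu(y),
$$
which is reversible with respect to $\mu$. A first step is to verify a symmetrized matrix integration by parts: applying the scalar IBP entrywise to the product rule $(g\,\gen f)_{ik} = \sum_j g_{ij}(\gen f)_{jk}$ gives, for smooth $f,g:\R^n\to \sym$,
$$
\int \bigl(g\,\gen f + (\gen f)\, g\bigr)\, d\mu = -\sum_{i=1}^n \int \bigl((\partial_i g)(\partial_i f) + (\partial_i f)(\partial_i g)\bigr)\, d\mu.
$$

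Next I would introduce $u(t) := \E_\mu\bigl[(P_t f - \E_\mu f)^2\bigr]$, so that $u(0) = \Var_\mu(f)$ and $u(t)\to 0$ as $t\to\infty$ by the ergodicity of the OU semigroup. Differentiating under the integral, applying $\partial_t P_t f = \gen P_t f$ and the symmetrized IBP identity above with $g = P_t f$ yields
$$
u'(t) = -2\sum_{i=1}^n \E_\mu\bigl[(\partial_i P_t f)^2\bigr].
$$
The crucial structural feature of the OU semigroup enters here: differentiating the Mehler formula entrywise gives the commutation relation $\partial_i P_t f = e^{-t} P_t(\partial_i f)$. Since $x\mapsto x^2$ is operator convex on $\sym$, matrix Jensen applied to the averaging operator $P_t$ delivers the pointwise semidefinite bound $(P_t \partial_i f)^2 \preceq P_t\bigl[(\partial_i f)^2\bigr]$. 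Combining these two facts produces
$$
-u'(t) \preceq 2 e^{-2t} \sum_{i=1}^n \E_\mu\bigl[(\partial_i f)^2\bigr].
$$

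Finally, integrating this semidefinite inequality in $t$ on $[0,\infty)$ and using $u(\infty)=0$ gives
$$
\Var_\mu(f) = -\int_0^\infty u'(t)\, dt \;\preceq\; \sum_{i=1}^n \E_\mu\bigl[(\partial_i f)^2\bigr],
$$
which is the claimed Poincar\'e inequality. The main obstacle is that the scalar manipulations behind the usual semigroup argument—multiplying by $P_t f$ and integrating by parts, or differentiating $(P_t f)^2$—interact nontrivially with matrix ordering once $P_t f$ fails to commute with its time derivative or with its partial derivatives. The remedy is to keep everything at the level of pointwise semidefinite bounds, which is why the symmetrized matrix IBP and the operator convexity of the square are both essential: they allow us to convert the expression for $u'(t)$ into a one-sided semidefinite estimate that survives integration in $t$. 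The smoothness and $\LL$-integrability assumptions on $f$ and its partial derivatives justify differentiation under the integral in the Mehler formula as well as the interchange of limits at $t=\infty$.
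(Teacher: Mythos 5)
Your proposal is correct and the essential mechanism is the same as the paper's: it is a semigroup (de Bruijn-type) argument with the Ornstein--Uhlenbeck generator, using the commutation relation $\partial_i P_t f = e^{-t} P_t(\partial_i f)$ together with operator convexity of the square (matrix Jensen, $(P_t g)^2 \preceq P_t g^2$) to obtain a semidefinite bound on $-\frac{d}{dt}\Var_\mu(P_t f)$, and then integrating in $t$. The one genuine (but minor) organizational difference is that the paper first proves the one-dimensional inequality and then tensorizes via the product-measure matrix Poincar\'e inequality in Remark~\ref{rk:EfronStein}, whereas you work directly in $\R^n$ by deriving the identity $u'(t) = -2\sum_i \E_\mu[(\partial_i P_t f)^2]$ from an entrywise symmetrized integration by parts; this is just the multivariate form of what the paper packages as Lemma~\ref{lem:var-Dir} combined with Proposition~\ref{prop:GaussianDir-CDCh}. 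Your route avoids invoking the general product-measure Poincar\'e inequality, at the cost of re-deriving the variance--Dirichlet identity directly via IBP; the paper's route reuses machinery already established for general product measures. Both are sound and rely on the same key matrix observations.
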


 The above inequality appears to be new and extends to the matrix setting the scalar Gaussian Poincar\'e inequality. Indeed, when $f$ is a scalar function, the right-hand side is precisely the integral of the Euclidean norm squared of the gradient of $f$. 
A related Poincar\'e inequality for the Gaussian unitary ensemble was obtained in \cite[Theorem~4]{Ch-Hs-new} with the trace applied to both sides of the inequality. 
 Combined with Theorem~\ref{th: poincare to concentration}, the above statement implies the following concentration inequality. 

\begin{theorem}\label{thm:Gaussian-prod-Poin}
Let $\mu= \mu_1 \otimes \ldots \otimes \mu_n$ be the standard Gaussian measure on $\R^n$. Let $f: \R^n\to \sym$ be such that all of its matrix coefficients are smooth and in $\LL$.  Then for any  $t \in \R_+$,
\[
\mu\bigg( \lambda_{max}\big(f-\E_\mu f\big)\geq t\bigg)\leq 
d\exp\bigg(-\frac{t^2}{2 v_f+ t\sqrt{2 v_f}}\bigg),
\]
where $v_f=\displaystyle \sup_{(x_1,\ldots,x_n)\in \R^n} \big\| \sum_{i=1}^n (\partial_i f)^2(x_1, \cdots, x_n)\big\|$ with $\partial_i f (x_1, \ldots, x_n) =\frac{\partial f}{\partial x_i} (x_1, \ldots, x_n)$.
\end{theorem}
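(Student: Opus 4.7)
The plan is to obtain Theorem~\ref{thm:Gaussian-prod-Poin} as a direct application of the general mechanism in Theorem~\ref{th: poincare to concentration} combined with the matrix Gaussian Poincar\'e inequality of Theorem~\ref{th: Gaussian-Poincare}. The strategy is straightforward modulo a few domain/approximation issues.

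First, I would identify the appropriate reversible matrix Markov generator: the Ornstein--Uhlenbeck generator $\mathcal{L}$ on $\mathbb{R}^n$ equipped with the standard Gaussian measure $\mu$, extended coordinate-wise to matrix-valued functions. This choice is dictated by Theorem~\ref{th: Gaussian-Poincare}: the matrix carr\'e du champ operator associated with $\mathcal{L}$ is precisely
\[
\Gamma(f) = \sum_{i=1}^n (\partial_i f)^2,
\]
so Theorem~\ref{th: Gaussian-Poincare} reads exactly as a matrix Poincar\'e inequality $\mathrm{Var}_\mu(f) \preceq \mathcal{E}(f)$ with constant $\alpha = 1$.

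Second, I would plug these data into Theorem~\ref{th: poincare to concentration}. Since $\alpha = 1$ and
\[
v_f = \big\| \| \Gamma(f) \| \big\|_{L_\infty} = \sup_{x \in \mathbb{R}^n} \bigg\| \sum_{i=1}^n (\partial_i f)^2(x) \bigg\|,
\]
the theorem yields the claimed exponential bound on $\mu(\lambda_{\max}(f - \mathbb{E}_\mu f) \geq t)$ verbatim.

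The main obstacle is the domain/regularity mismatch between the two theorems being combined. Theorem~\ref{th: poincare to concentration} requires $f \in \mathcal{A}$, the algebra on which the carr\'e du champ is well defined for the Ornstein--Uhlenbeck semigroup, whereas the hypotheses of Theorem~\ref{thm:Gaussian-prod-Poin} only assume smoothness of the matrix entries and $L^2(\mu)$ integrability (not of the derivatives). If $v_f = \infty$, the statement is vacuous, so I would assume $v_f < \infty$. Then I would argue by approximation: smooth cutoffs $f_R := f \cdot \chi_R$ (with $\chi_R$ a smooth spatial cutoff at scale $R$) together with convolution by a Gaussian mollifier produce entrywise-smooth bounded-derivative functions lying in $\mathcal{A}$ with $v_{f_R} \leq v_f + o(1)$. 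Applying the already-established inequality to $f_R$ and passing to the limit $R \to \infty$ via dominated convergence both for $\mathbb{E}_\mu f_R \to \mathbb{E}_\mu f$ and for the probability on the left (using $f$ being locally bounded and a standard tightness argument on the Gaussian tail) yields the conclusion for $f$ itself.
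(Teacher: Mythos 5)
Your proposal follows essentially the same route as the paper: the paper derives Theorem~\ref{thm:Gaussian-prod-Poin} by combining Theorem~\ref{th: poincare to concentration} (with $\alpha=1$) with the Gaussian matrix Poincar\'e inequality of Theorem~\ref{th: Gaussian-Poincare} and the Ornstein--Uhlenbeck carr\'e du champ identity $\Gamma(f)=\sum_i(\partial_i f)^2$ from Proposition~\ref{prop:GaussianDir-CDCh}. Your additional remarks on the regularity/domain mismatch and the approximation argument go beyond what the paper spells out (the paper passes over this point silently), and are a reasonable supplement rather than a deviation in method.
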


The interesting feature in the above theorem is that it captures concentration in terms of the variations of the matrix function, in the usual sense of variations. We couldn't locate a comparable result in the literature, as previous matrix concentration inequalities designed specifically for the Gaussian measure dealt with matrix Gaussian series; i.e.  $f(x_1,\ldots,x_n)=\sum_{i=1}^n x_i A_i$ for some deterministic $A_1,\ldots,A_n\in \sym$.  
 We should note that for this particular example, the literature contains sub-Gaussian bounds on matrix concentration (see \cite[Chapter~4]{Tropp-survey}).

In Section~\ref{sec: product measures}, we further illustrate this procedure by investigating 
general product measures. To this aim, we prove a corresponding matrix Poincar\'e inequality  (Theorem~\ref{th:Poincare-PM}) and derive an exponential matrix concentration inequality (Theorem~\ref{th: product measures}). In this setting, the matrix Poincar\'{e} inequality is equivalent to the matrix Efron-Stein inequality \cite[Theorem~5.1]{Ch-Hs}. We provide an alternative proof of this by building an appropriate Markov process.  

We should note that, as in the scalar case, the approach based on Poincar\'e inequalities cannot lead to sub-Gaussian bounds on concentration. A possible  approach to deriving sub-Gaussian bounds would be the elaboration of matrix log-Sobolev inequalities and of a general procedure turning these into corresponding matrix concentration inequalities. As of this writing, such procedure remains a challenging task and it is not clear how it could be implemented.

\subsection*{\bf Matrix Poincar\'e and concentration for SCP measures}
Concentration inequalities become increasingly more challenging without the independence structure. The matrix setting adds another layer of difficulty to the problem.
In view of this, it is not surprising that there are few matrix concentration inequalities in the dependent case with \cite{BMY-betamixing,MatrixCon-exch-pairs} only dealing with sums of random matrices, while in the works \cite{PMT, PMT-EffronStein} a matrix bounded difference inequality was established under a form of weak dependence.

The interesting feature in Theorem~\ref{th: poincare to concentration} is its ability to deal with any probability measure $\mu$, the main remaining task lies in the construction of a suitable Markov process having $\mu$ as its stationary measure. With this perspective in mind, 
we are able to establish a matrix concentration inequality for functions of negatively dependent random variables. More precisely, we prove in Theorem \ref{th: strong rayleigh} a matrix Poincare inequality for any homogeneous probability measure on the $n$-dimensional unit cube satisfying a form of negative dependence known as the \emph{stochastic covering property} (SCP).  Combined with Theorem \ref{th: poincare to concentration}, this implies a corresponding matrix exponential concentration inequality. In the scalar case, sub-Gaussian concentration bounds were obtained by Pemantle--Peres \cite{Pemantle-Peres} and Hermon--Salez \cite{H-Salez} who also established a modified log-Sobolev inequality. The proof of Theorem~\ref{th: strong rayleigh} relies on the approach of \cite{H-Salez} and extends it to the matrix setting.

The stochastic covering property was put forward in \cite{Pemantle-Peres} as a form of negative dependence. Indeed, it was shown in \cite{Pemantle-Peres} that the  \emph{strong Rayleigh property} implies SCP. The class of strong Rayleigh measures was introduced by Borcea--Br\"and\'en--Liggett in \cite{BBL} with the aim of building a theory of negative dependence. One of the main features of this class is its stability under many natural operation such as conditioning and projecting. Due to this, the strong Rayleigh property, which implies negative association, is more commonly used. Moreover,   the class of strong Rayleigh measures (and thus the ones satisfying SCP) contains  several interesting examples, such as: determinantal measures and point processes, independent Bernoullis conditioned on their sum, measures obtained by running the exclusion dynamics from a deterministic state.

We begin by recalling the definition of the stochastic covering property.  Let $n\in \N$. We equip the $n$-dimensional discrete unit cube $\{0,1\}^n$ with the partial order $\preceq$ defined by 
  $$x \preceq y \Longleftrightarrow x=y\,\,\,\,\textrm{or}\,\,\,\,\exists !\,\,i\leq n; y_i=x_i+1.$$
  We extend this order to the set of probability measures on $\{0,1\}^n$ in the following way.  If $\mu_1$ and $\mu_2$ are two probability measures on $\{0,1\}^n$,  we write $\mu_1\preceq \mu_2$ if there exists a coupling $\kappa$ on $\{0,1\}^n\times \{0,1\}^n$  such that  $\textrm{support}(\kappa)\subseteq \{(x,y)\in \{0,1\}^n\times \{0,1\}^n:\, x\preceq y\}$. 
  
Fix now $k\in\{1, \cdots ,n\}$. Let $\mu$ be a probability measure on $\{0,1\}^n$ and $\xi$ be a random variable on $\Omega$ with distribution $\mu$. We say that $\mu$ is  $k$-\emph{homogeneous} if   $\Prob(\sum_{i=1}^n{\xi_i}=k)=1$, that is, $\mu$ is a probability measure on $\Omega:=\{x\in \{0,1\}^n:\, \sum_{i=1}^n x_i=k\}$. We say that $\mu$ has the \emph{stochastic covering property}  if  for any subset $S$ of $\{1,\cdots, n\}$, and any  $x_S,y_S\in \{0,1\}^S$,  
$$x_S \preceq y_S \Longrightarrow \mu\left(\cdot \mid \xi_S=y_S\right) \preceq \mu\left( \cdot \mid \xi_S=x_S\right),$$
 where $\xi_S$ denotes the restriction of $\xi$ to the coordinates indexed by $S$. We should note that $\mu\left(\cdot \mid \xi_S=y_S\right)$ is a probability measure on $\{0,1\}^{S^c}$ supported on vectors $z_{S^c}$ satisfying $\Vert z_{S^c}\Vert_1=k-\Vert y_S\Vert_1$, where $\Vert\cdot\Vert_1$ stands for the $\ell_1$-norm (here equal to the number of coordinates equal to one). We are now able to state the matrix Poincar\'{e} inequality for SCP measures. 
\begin{theorem}\label{th: strong rayleigh}
Let $\mu$ be a $k$-homogeneous probability measure on $\Omega:=\{0,1\}^n$  with the SCP property and denote by $\widetilde{\Omega}=\{(x,y)\in\Omega^2:\, \text{$x$ and $y$ differ on exactly $2$ coordinates}\}$. Then there exists a Markov generator $Q$ supported on $\widetilde{\Omega}$ and satisfying $\max\{-Q(x,x):\, x\in \Omega\}\leq 1$, such that for any $f:\, \Omega\to \sym$, we have 
$$
\Var_\mu(f)\preceq 2k \Dir(f),
$$
where $\Dir$ is the Dirichlet form associated with $Q$. 
\end{theorem}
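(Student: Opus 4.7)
The plan is to adapt to the matrix setting the scalar argument of Hermon--Salez \cite{H-Salez}. The only matrix-analytic tool required there is Cauchy--Schwarz applied to a telescoping sum, and for Hermitian summands we do have the PSD bound
\begin{equation*}
\Bigl(\sum_{i=1}^{\ell} A_i\Bigr)^{2}\preceq \ell\sum_{i=1}^{\ell} A_i^{2},
\end{equation*}
so the entire combinatorial infrastructure (couplings, canonical paths, congestion) should carry over unchanged once the flow is in place.

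First, I would define $Q$ as the natural swap chain: for $x\in\Omega$ and indices $i,j$ with $x_i=1$, $x_j=0$, set
\begin{equation*}
Q(x,x^{ij})=\frac{1}{k}\,\mu\bigl(\xi_i=0,\,\xi_j=1\,\bigm|\,\xi_{\{i,j\}^{c}}=x_{\{i,j\}^{c}}\bigr),
\end{equation*}
where $x^{ij}$ is obtained from $x$ by swapping its $i$-th and $j$-th bits, and $Q(x,y)=0$ otherwise off the diagonal. Each transition changes exactly two coordinates, so $Q$ is supported on $\widetilde{\Omega}$; a direct computation using $k$-homogeneity yields reversibility with respect to $\mu$, and the $1/k$ normalisation gives $-Q(x,x)\leq 1$.

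Next, starting from the PSD identity
\begin{equation*}
\Var_\mu(f)=\tfrac12\iint\bigl(f(x)-f(y)\bigr)^{2}\,\mu(dx)\,\mu(dy),
\end{equation*}
I would use the SCP to build, for every ordered pair $(x,y)\in\Omega^{2}$ with $x\neq y$, a probability measure $\nu_{x,y}$ on paths $x=z_0,z_1,\ldots,z_{\ell(x,y)}=y$ lying in $\widetilde{\Omega}$, with length $\ell(x,y)\leq k$. This is exactly where SCP enters: whenever $x_S\preceq y_S$, the SCP furnishes a coupling of $\mu(\cdot\mid\xi_S=x_S)$ with $\mu(\cdot\mid\xi_S=y_S)$ whose marginals differ by a single transposition, and iterating pairwise from $x$ to $y$ produces the desired path distribution. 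Expressing $f(x)-f(y)$ as a telescoping sum along each path, applying matrix Cauchy--Schwarz, and exchanging the order of summation/integration then leads to
\begin{equation*}
\Var_\mu(f)\preceq 2k\sum_{(z,z')\in\widetilde{\Omega}}\mu(z)\,Q(z,z')\bigl(f(z)-f(z')\bigr)^{2}=2k\,\Dir(f),
\end{equation*}
provided we establish the congestion estimate
\begin{equation*}
\sum_{x,y}\mu(x)\mu(y)\,\nu_{x,y}\bigl(\{(z,z')\}\bigr)\,\ell(x,y)\;\leq\;2k\,\mu(z)Q(z,z')
\end{equation*}
for every edge $(z,z')\in\widetilde{\Omega}$.

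The main obstacle is precisely the construction of the flow $\nu_{x,y}$ and the verification of this congestion bound: this is the technical heart of the scalar argument of Hermon--Salez and crucially exploits both SCP and $k$-homogeneity to control the total traffic through each swap edge by a factor involving $k$. Once this purely combinatorial fact is established, the upgrade to the matrix inequality is automatic, since every step from the telescoping identity onward respects the PSD order and the only matrix-analytic input needed is the matrix Cauchy--Schwarz bound invoked at the start.
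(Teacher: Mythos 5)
Your proposal has three problems; the first two are fatal and the third is an admitted gap.

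\textbf{First}, the generator you define is not normalized. For $i,j$ with $x_i=1,\ x_j=0$, conditioning on $\xi_{\{i,j\}^c}=x_{\{i,j\}^c}$ leaves only two admissible completions, $x$ and $x^{ij}$, so
$\mu(\xi_i=0,\xi_j=1\mid\xi_{\{i,j\}^c}=x_{\{i,j\}^c})=\mu(x^{ij})/(\mu(x)+\mu(x^{ij}))$,
and therefore
\begin{equation*}
-Q(x,x)=\frac{1}{k}\sum_{i:\,x_i=1}\ \sum_{j:\,x_j=0}\frac{\mu(x^{ij})}{\mu(x)+\mu(x^{ij})}.
\end{equation*}
Already for the uniform measure on $k$-subsets of $\{1,\dots,n\}$ (which is SCP) this equals $\tfrac{1}{k}\cdot\tfrac{k(n-k)}{2}=\tfrac{n-k}{2}$, which exceeds $1$ as soon as $n-k\geq 3$. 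So the chain you wrote down does not satisfy the normalization in the statement; the paper's generator is a genuinely different object, an average over random orderings of revealed coordinates with SCP couplings plugged in at each step.

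\textbf{Second}, you mischaracterize Hermon--Salez: their argument is not a canonical-paths / congestion argument (Diaconis--Stroock / Feder--Mihail style) but a Lu--Yau-type inductive martingale decomposition, conditioning on coordinates one at a time. The paper makes this explicit: it \emph{extracts} the generator from that induction and then proves the matrix Poincar\'e inequality directly, via the two-state matrix Poincar\'e identity (Lemma~\ref{lem: two-state}), operator convexity of the square, and recognizing a telescoping sum over conditional expectations of $f$. No flow $\nu_{x,y}$ and no per-edge congestion estimate appear anywhere. Your statement that ``the entire combinatorial infrastructure (couplings, canonical paths, congestion) should carry over unchanged'' therefore refers to an infrastructure that does not exist in the source you cite.

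\textbf{Third}, even taking the canonical-path route on its own terms, you explicitly leave the central estimate
$\sum_{x,y}\mu(x)\mu(y)\,\nu_{x,y}(\{(z,z')\})\,\ell(x,y)\leq 2k\,\mu(z)Q(z,z')$
unproven and call it ``the main obstacle.'' That congestion bound \emph{is} the theorem; without it, what you have is a strategy outline, not a proof. You are correct that the matrix lift of a scalar canonical-path argument is automatic via $(\sum_i A_i)^2\preceq \ell\sum_i A_i^2$, but that lift is not where the difficulty lies. The accurate summary is: the generator must be redefined (the simple pair-swap chain is not $\leq 1$-normalized), the inductive/telescoping mechanism from Hermon--Salez must replace the congestion mechanism, and the matrix upgrade then comes from operator convexity of $t\mapsto t^2$ applied to the SCP couplings and a two-state matrix Poincar\'e identity, exactly as the paper does.
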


We refer to Section~\ref{sec: strong rayleigh} where an explicit expression of the Markov generator is given. The result above states that any probability measure with the SCP property satisfies a matrix Poincar\'e inequality with normalized Markov generator and constant $2k$. The analogous result in the scalar case  was recently established in 
\cite{H-Salez}. While in \cite{H-Salez} a scalar Poincar\'e inequality (and a modified log-Sobolev inequality) is derived by means of an induction method introduced in \cite{yau}, we extract from this inductive procedure the  explicit Markov generator and prove the matrix Poincar\'e inequality directly using operator convexity arguments.  
Combined with Theorem~\ref{th: poincare to concentration}, this implies the following matrix concentration inequality. 
\begin{theorem}\label{th: SR-conc}
Let $\mu$ be a $k$-homogeneous probability measure on $\Omega=\{0,1\}^n$ with the SCP property. Let $f: \Omega\to \sym$ be a $1$-Lipschitz matrix function in the sense that for any $x,y\in \Omega$,  
$$
\Vert f(x)-f(y)\Vert \leq \Vert x-y\Vert_1, 
$$
where $\Vert \cdot \Vert$ stands for the operator norm. Then, for any $t \in \R_+$,
\[
\mu\bigg( \lambda_{max}\big(f-\E_\mu f\big)\geq t\bigg)\leq 
d\exp\bigg(-\frac{t^2}{8k+ 2t\sqrt{2k}}\bigg).
\]
\end{theorem}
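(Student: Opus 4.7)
The plan is to combine Theorem~\ref{th: strong rayleigh} with the general machinery of Theorem~\ref{th: poincare to concentration}. Theorem~\ref{th: strong rayleigh} supplies a matrix Poincar\'e inequality with constant $\alpha=2k$ for the explicit Markov generator $Q$ supported on pairs $(x,y)$ that differ on exactly two coordinates, normalized so that $\max_x(-Q(x,x))\leq 1$. Since $\Omega$ is finite, every matrix function $f:\Omega\to\sym$ lies in the algebra $\mathcal{A}$, so Theorem~\ref{th: poincare to concentration} may be applied directly, provided we control the quantity $v_f=\bigl\|\|\Gamma(f)\|\bigr\|_{L_\infty}$ in terms of the Lipschitz constant of $f$.

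The second step is to compute the matrix carr\'e du champ for the discrete generator $Q$. By the standard formula, for any $x\in\Omega$,
\[
\Gamma(f)(x)=\tfrac12\sum_{y\in\Omega}Q(x,y)\bigl(f(y)-f(x)\bigr)^2.
\]
This sum is supported on pairs $(x,y)\in\widetilde{\Omega}$, i.e.\ $\|y-x\|_1=2$. The $1$-Lipschitz assumption then gives $\|f(y)-f(x)\|\leq 2$, and since $f(y)-f(x)$ is Hermitian,
\[
\bigl(f(y)-f(x)\bigr)^2\preceq \|f(y)-f(x)\|^2\,I\preceq 4I.
\]
Combining with $\sum_{y\neq x}Q(x,y)=-Q(x,x)\leq 1$ yields $\Gamma(f)(x)\preceq 2I$ for every $x\in\Omega$, so $v_f\leq 2$.

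Finally, plugging $\alpha=2k$ and $v_f=2$ into the bound of Theorem~\ref{th: poincare to concentration} gives $2\alpha v_f=8k$ and $\sqrt{2\alpha v_f}=2\sqrt{2k}$, hence
\[
\mu\bigl(\lambda_{\max}(f-\E_\mu f)\geq t\bigr)\leq d\exp\!\left(-\frac{t^2}{8k+2t\sqrt{2k}}\right),
\]
which is exactly the claimed inequality. The proof is really an application of the two main results of the paper, and no genuinely new step is required. The only place where care is needed is the passage from the scalar Lipschitz bound $\|f(y)-f(x)\|\leq\|y-x\|_1$ to the operator inequality $(f(y)-f(x))^2\preceq\|f(y)-f(x)\|^2 I$; this is the standard fact that for a Hermitian matrix $A$ one has $A^2\preceq\|A\|^2 I$, and it is the one matrix-specific ingredient which allows the scalar-style bound to go through in the positive semidefinite ordering.
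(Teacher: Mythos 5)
Your proposal is correct and follows essentially the same route as the paper: combine the SCP matrix Poincar\'e inequality (Theorem~\ref{th: strong rayleigh}) with the general machinery of Theorem~\ref{th: poincare to concentration}, and bound $v_f$ by observing that the carr\'e du champ $\Gamma(f)(x)=\tfrac12\sum_y Q(x,y)(f(x)-f(y))^2$ is supported on pairs with $\|x-y\|_1=2$ and that $\sum_{y\neq x}Q(x,y)=-Q(x,x)\leq 1$. The only difference is cosmetic: you make explicit the elementary operator bound $(f(y)-f(x))^2\preceq\|f(y)-f(x)\|^2 I$, which the paper condenses into a remark about the triangle inequality.
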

\medskip 

A special case when $f(x_1,\ldots,x_n)=\sum_{i=1}^n x_i A_i$ for some $A_1,\ldots,A_n\in \sym$, was recently investigated in \cite{KS} where, up to a logarithmic term, a sub-Gaussian concentration bound is derived. The method developed in \cite{KS} extends to the matrix setting the martingale approach elaborated in \cite{Pemantle-Peres}. Once again, while our matrix Poincar\'e inequality cannot yield sub-Gaussian concentration, it provides a matrix concentration valid for any matrix Lipschitz function while the results in \cite{KS} are only concerned with the specific example provided by $f(x_1,\ldots,x_n)=\sum_{i=1}^n x_i A_i$ for some $A_1,\ldots,A_n\in \sym$.

The paper is organized as follows. In Section~\ref{sec: matrix generator}, we recall the notion of matrix Markov generator, its associated matrix Dirichlet form and carr\'e du champ operator and their properties. In section~\ref{sec: matrix trace ineq}, we establish the relation between the matrix Dirichlet form and the Laplace transform. Section~\ref{sec: poincare to concentration} is devoted to the proof of Theorem~\ref{th: poincare to concentration}. In Section~\ref{sec: product measures}, we investigate the matrix Poincar\'e inequality for product measures and in particular for the standard Gaussian measure. Finally, Section~\ref{sec: strong rayleigh} is devoted to the study of the matrix Poincar\'e inequality for SCP measures.

\subsection*{Aknowledgement:} 

The authors are grateful to the anonymous referee whose remarks and generous suggestions  greatly improved the presentation. In particular,  the current proofs of Theorem~3.3 and Lemma~3.4 were communicated  by the referee   and replace previously lengthy arguments. 
The authors would also like to thank De Huang and Lester Mackey for helpful comments. 
This work was initiated during visits of the authors to Universit\'e Paris Diderot and the American University of Beirut. We would like to thank these institutions for the excellent working conditions. P.Y. was supported by grant ANR-16-CE40-0024-01.

\section{Matrix markov semi-group and generator}\label{sec: matrix generator}

In  \cite[Section 3]{Ch-Hs-17}, the authors developed a framework of Markov semigroups on matrix-valued functions and defined a non-commutative
version of the carr\'e du champ operator and Dirichlet form. 
In this preliminary section, we recall these notions in our context for completeness and state the necessary properties which we will use. 
We refer to \cite{Ch-Hs-17} for more on this topic. 

Let $\Omega$ be a Polish space and $(X_t)_{t\geq 0}$ be a Markov process with stationary measure $\mu$. Let  $\LL$ be the Hilbert space of square integrable functions with respect to $\mu$. The Markov semi-group $(P_t)_{t\geq 0}$ associated to $(X_t)_{t\geq 0}$ defines an operator on $\LL$ through the formula  
$$
P_t f(x)=\E[ f(X_t)\mid X_0=x],
$$
for any $x\in \Omega$.  Recall that 
the Markov process is said to be reversible if for every $f,g\in \LL$, $$\langle f, P_t g \rangle_{\mu}=\langle P_t f, g \rangle_{\mu},$$
where $\langle \cdot, \cdot\rangle_{\mu}$ is the usual inner product of $\LL$. 
It is said to be ergodic if $P_t f \underset{t\to +\infty}{\longrightarrow} \E_{\mu}(f)$ in $\LL$ for every $f\in \LL$.

To the Markov semi-group $(P_t)_{t\geq 0}$ is associated its     infinitesimal generator $\gen$   defined by$$
\gen f=\lim_{t\to 0} \frac{P_t f-f}{t},
$$
for every function $f\in \DD$, where $\DD$ is the $\LL$-domain of $\gen$.  
In this paper, we are mostly interested in (Hermitian) matrix valued functions. The action of the Markov semi-group and that of the infinitesimal generator can be naturally extended to matrix valued functions by considering the action of the semi-group on each entry of the matrix valued function.  
Therefore, given a  function $f: \Omega \to \sym$ whose matrix coefficients belong to $\LL$ (or to $\DD$), we set similarly 
$$
P_t f(x)=\E[ f(X_t)\mid X_0=x], 
$$ 
for any $x\in \Omega$ and $\gen f=\lim_{t\to 0} \frac{P_tf-f}{t}$. 
We will refer to $P_t$ as \textit{matrix Markov semi-group} and $\gen$ as \textit{matrix Markov generator} to emphasize that we will be interested in their  action on matrix valued functions. By abuse of notation, the space of  functions $f: \Omega  \to \sym$ whose matrix coefficients belong to $\DD$  will be still denoted by $\DD$.    Below, we collect some of the basic properties paralleling their scalar counterpart and refer to \cite{Ch-Hs-17} for their proofs. 

\begin{proposition}\label{lem: matrix generator prop}
Let $\Omega$ be a Polish space and $(X_t)_{t\geq 0}$ be a Markov process with stationary measure $\mu$. The matrix Markov semi-group $(P_t)_{t\geq 0}$ and matrix Markov generator $\gen$ satisfy the following elementary properties for every matrix valued functions $f,g\in \DD$:
\begin{enumerate}
    \item $P_t$ and $\gen$ commute.
        \item If $(X_t)_{t\in \mathbb{R}_+}$ is reversible then $\E_\mu[f\gen(g)]=\E_\mu[\gen(f) g]$.  
    \item  $\E_\mu[\gen f]=0$.
    \item   $P_tf$ takes values in $\PSD$.
   \item \label{en:square-MSG} $(P_tf)^2\preceq P_tf^2$. 
    \item \label{en:Jenson-MSG}  If $\phi:\R\to \R$ is a convex function, then 
    $$
    \Tr\big(\phi(P_t f)\big)\leq \Tr\big(P_t\phi(f)\big).
    $$
\end{enumerate}
\end{proposition}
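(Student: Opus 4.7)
The plan is to exploit the fact that both the matrix Markov semigroup and the matrix generator are defined by letting their scalar counterparts act entry-wise on matrix coefficients, i.e.\ $(P_t f)_{ij} = P_t(f_{ij})$ and $(\gen f)_{ij} = \gen(f_{ij})$. Under this reduction, properties (1), (2), (3) follow directly from their scalar analogues: property~(1) from the scalar commutation $\partial_t P_t = \gen P_t = P_t \gen$ on $\DD$; property~(3) from the $\mu$-invariance identity $\E_\mu[\gen \varphi] = 0$ applied to each coefficient; and property~(2) from the scalar reversibility identity $\E_\mu[\varphi \gen \psi] = \E_\mu[(\gen \varphi)\psi]$ applied after expanding $(f \gen g)_{ij} = \sum_k f_{ik}\, \gen(g_{kj})$ and summing over $k$ (and similarly for $(\gen(f) g)_{ij}$).

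For the remaining statements I would work with the probabilistic representation $P_t f(x) = \E[f(X_t) \mid X_0 = x]$, with the conditional expectation understood entry-wise. Property~(4) is then an instance of the fact that $\PSD$ is a closed convex cone, so any entry-wise average of $\PSD$-valued random matrices remains in $\PSD$ (assuming $f$ is $\PSD$-valued). Property~(5) follows from the operator convexity of $t \mapsto t^2$; concretely, for any $v \in \mathbb{C}^d$ and any $x$, scalar Jensen applied to the convex map $w \mapsto \|w\|^2$ yields
\[
v^* (P_t f(x))^2 v = \big\|\E[f(X_t) v \mid X_0 = x]\big\|^2 \leq \E\big[\|f(X_t) v\|^2 \,\big|\, X_0 = x\big] = v^* P_t(f^2)(x)\, v,
\]
which after varying $v$ gives $(P_t f)^2 \preceq P_t(f^2)$. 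For property~(6), the key input is Klein's lemma: for any convex $\phi: \R \to \R$, the functional $A \mapsto \Tr \phi(A)$ is convex on $\sym$. Combining this with scalar Jensen for the conditional expectation, and using that $\Tr$ commutes with the entry-wise action of $P_t$, gives
\[
\Tr \phi\big(P_t f(x)\big) \leq \E\big[\Tr \phi(f(X_t)) \,\big|\, X_0 = x\big] = P_t\big(\Tr \phi(f)\big)(x) = \Tr\big(P_t \phi(f)\big)(x).
\]

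The main obstacle, modest as it is, lies in (5) and (6): unlike (1)--(4), they cannot be proved by a purely entry-wise reduction to the scalar theory and instead rely on operator-theoretic convexity inputs, namely operator convexity of the square function for (5) and Klein's trace convexity lemma for (6). Once these ingredients are granted, each conclusion reduces to a one-line Jensen-type argument as above, so no further machinery beyond standard operator convexity is needed.
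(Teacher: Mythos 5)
Your proposal is correct. The paper itself does not supply a proof of this proposition; it states the properties and refers to Cheng--Hsieh--Tomamichel (\cite{Ch-Hs-17}) for proofs, so there is no in-paper argument to compare against. Your entry-wise reduction for (1)--(3), the convex-cone observation for (4) (with the implicit hypothesis that $f$ is $\PSD$-valued, which you correctly flag), the Hilbert-space Jensen argument for (5), and the combination of trace convexity (Klein's lemma) with conditional Jensen for (6) are all standard and sound, and are exactly what the phrase ``paralleling their scalar counterpart'' is pointing to.
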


We introduce the \textit{matrix Dirichlet form} given by 
$$
\Dir(f)=- \E_\mu[f\gen(f)],
$$
for any $f\in \DD$. Given an algebra $\mathcal{A}\subseteq \DD$, define    the \textit{matrix carr\'e du champ} operator   by 
 $$\Gamma(f):= \frac12 \big( \gen(f^2)- f\gen(f) - \gen(f) f\big),
$$
for $f\in \mathcal{A}$, 
Let us note that already in the above definitions, we see the subtlety of the noncommutative nature of the objects manipulated. For instance, while in the scalar counterpart $f$ and $\gen(f)$ commute, this is no longer the case here and one needs to take into account this when dealing with the above notions.

It is not clear at first glance if the usual properties of the Dirichlet form and the carr\'e du champ operator extend to their matrix counterparts. This is the case when the underlying Markov process is reversible as we verify in the next proposition. 

\begin{proposition}\label{prop: dirichlet properties}
Let $\Omega$ be a Polish space and $(X_t)_{t\geq 0}$ be a reversible Markov process with stationary measure $\mu$. Then the matrix Dirichlet form $\Dir$ and the carr\'e du champ operator $\Gamma$ satisfy the following properties for every matrix valued function $f\in \mathcal{A}$:
\begin{enumerate}
    \item For any $a\in \R$, we have $\Gamma(af)=a^2\, \Gamma(f)$. 
    \item $\Dir(f)=\E_\mu[\Gamma(f)]$.
    \item We have 
    $$
    \Gamma(f)(x)=\lim_{t\to 0} \frac{\E\big[\big(f(X_t)-f(X_0)\big)^2\mid X_0=x\big]}{2t}
    $$
    and 
      $$
    \Dir(f)=\lim_{t\to 0} \frac{\E\big[\big(f(X_t)-f(X_0)\big)^2\big]}{2t}, \quad  X_0 \sim  \mu.   $$
    In particular, $\Gamma(f):\Omega\to \PSD$ and $\Dir(f)\in\PSD$. 
    \item If $\Omega$ is finite, then for any $x\in \Omega$ we have 
    $$
    \Gamma(f)(x)= \frac12 \sum_{y\in \Omega} Q(x,y) \big(f(y)-f(x)\big)^2,
    $$
    and 
    $$
    \Dir(f)=\frac12 \sum_{x,y\in \Omega} \mu(x)Q(x,y) \big(f(y)-f(x)\big)^2,
    $$
    where $Q$ is the $\vert \Omega\vert\times \vert \Omega\vert$ matrix representing the Markov generator. 
\end{enumerate}
\end{proposition}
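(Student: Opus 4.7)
The plan is to dispatch the four items in order; each follows from the defining formulas combined with Proposition~\ref{lem: matrix generator prop}, and the only real subtlety is careful bookkeeping of matrix orderings.

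For item~(1) I would simply expand $\Gamma(af)$ and pull the scalar $a$ out of every occurrence of $\gen$; each of the three summands picks up a factor of $a^2$ and the identity follows. For item~(2) I would write
$$
\E_\mu[\Gamma(f)] = \tfrac12 \E_\mu[\gen(f^2)] - \tfrac12 \E_\mu[f\,\gen(f)] - \tfrac12 \E_\mu[\gen(f)\,f],
$$
observe that the first term vanishes by item~(3) of Proposition~\ref{lem: matrix generator prop}, and apply reversibility (item~(2) of the same proposition) with $g=f$ to obtain $\E_\mu[f\,\gen(f)] = \E_\mu[\gen(f)\,f]$. Combining the two equal remaining terms yields $-\E_\mu[f\,\gen(f)] = \Dir(f)$. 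The crucial point is that reversibility is available here in its matrix form, without a trace, so the ordering of $f$ and $\gen(f)$ is preserved throughout.

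For item~(3) I would condition on $X_0 = x$ and expand the square,
$$
\E\big[(f(X_t)-f(X_0))^2 \mid X_0=x\big] = P_t(f^2)(x) - P_t(f)(x)\,f(x) - f(x)\,P_t(f)(x) + f(x)^2.
$$
Dividing by $2t$ and letting $t \to 0$ converts each of the three difference quotients into the corresponding term in the definition of $\Gamma(f)(x)$; integrating against $\mu$ and invoking item~(2) just established then delivers the analogous limit for $\Dir(f)$. Positive semi-definiteness is then free: the pre-limit quantities are (conditional) expectations of squares of Hermitian matrices, hence lie in $\PSD$, and $\PSD$ is closed under limits.

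For item~(4), in the finite setting the generator acts as $\gen(g)(x) = \sum_{y\neq x} Q(x,y)\big(g(y)-g(x)\big)$, and I would apply this to $f^2$ and to $f$ inside the definition of $\Gamma(f)(x)$. The computation reduces to the pointwise identity
$$
\big(f(y)^2 - f(x)^2\big) - f(x)\big(f(y)-f(x)\big) - \big(f(y)-f(x)\big)f(x) = \big(f(y)-f(x)\big)^2,
$$
which holds precisely because the cross terms $f(x)f(y) + f(y)f(x)$ produced when expanding $(f(y)-f(x))^2$ are exactly cancelled by the two symmetric subtractions. Averaging against $\mu$ and using item~(2) then produces the formula for $\Dir(f)$. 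The main obstacle I anticipate, and it is a rather mild one, is exactly this bookkeeping: the symmetric placement of $f$ on both sides of $\gen(f)$ in the definition of $\Gamma$ is what lets the noncommutative argument run parallel to the scalar one throughout (2) and (4), and one has to resist the temptation to commute $f$ and $\gen(f)$ at any step.
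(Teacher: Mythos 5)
Your proposal is correct and follows essentially the same approach as the paper's proof: item~(1) by linearity, item~(2) by the vanishing of $\E_\mu[\gen(f^2)]$ plus reversibility in its matrix form, and item~(3) by expanding the conditional expectation $\E[(f(X_t)-f(X_0))^2\mid X_0=x]$ into the three difference quotients defining $\Gamma$. The only slight divergence is in item~(4), where the paper derives the finite-state formula as an immediate corollary of item~(3) by writing $\Gamma(f)(x)=\lim_{t\to 0}\frac{P_t h_x(x)}{2t}=\frac12 (Qh_x)(x)$ with $h_x(y)=(f(y)-f(x))^2$, whereas you substitute the explicit kernel $\gen(g)(x)=\sum_{y}Q(x,y)(g(y)-g(x))$ directly into the definition of $\Gamma$ and verify the pointwise algebraic identity; both are valid and amount to the same cancellation of cross terms.
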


\begin{proof}
The first property follows easily from the linearity of $\gen$. 
To establish the second property, note that 
$$
\E_\mu [\Gamma(f)]=  \frac12 \big( \E_\mu[\gen(f^2)]-\E_\mu[f\gen(f)]-\E_\mu[\gen(f) f]\big).
$$
Using that $\E_\mu[\gen(f^2)]=0$ and that $\E_\mu[f\gen(f)]=\E_\mu[\gen(f) f]$ by reversibility, we get the claim. To establish the third, we use the definition of $\gen$ to explicitly write   
\begin{align*}
\Gamma(f)(x)&=\lim_{t\to 0} \frac{\E[f^2(X_t)\mid X_0=x]-f^2(x)}{2t}- \lim_{t\to 0}\frac{f(x) \big(\E[f(X_t)\mid X_0=x]- f(x)\big)}{2t}\\
&\\
&\qquad -\lim_{t\to 0}\frac{\big(\E[f(X_t)\mid X_0=x]- f(x)\big) f(x)}{2t} \\
&\\
&= \lim_{t\to 0} \frac{\E[f^2(X_t)\mid X_0=x]+f^2(x)- f(x)\E[f(X_t)\mid X_0=x]-\E[f(X_t)\mid X_0=x]f(x)}{2t}\\
&=\lim_{t\to 0} \frac{\E\big[\big(f(X_t)-f(X_0)\big)^2\mid X_0=x\big]}{2t},
\end{align*}
which establishes the claim for $\Gamma(f)$. 
Since $\Dir(f)=\E_\mu[\Gamma(f)]$, the expression of $\Dir(f)$ readily follows. 
From these representations, it is clear that $\Gamma(f)$ and $\Dir$ take values in $\PSD$.

Finally, when $\Omega$ is finite, using the above and defining $h_x: \Omega \to \sym$ by $h_x(y)=\big(f(y)-f(x)\big)^2$, we get 
$$
\Gamma(f)(x)=\lim_{t\to 0} \frac{(P_th_x)(x)}{2t}= \frac12 (Qh_x)(x)= \frac12 \sum_{y\in \Omega} Q(x,y) h_x(y),
$$
which proves the expression of $\Gamma(f)$ in last property of the proposition. It remains to use that $\Dir(f)=\E_\mu[\Gamma(f)]$ to derive the expression of $\Dir(f)$ and finish the proof. 
\end{proof}

We collect furthermore  some useful identities connecting the variance and the Dirichlet form. Unlike the scalar case, the following identity requires reversibility of the Markov  process. 

\begin{lemma}\label{lem:var-Dir}
Let $\Omega$ be a Polish space and $(X_t)_{t\geq 0}$ be a \emph{reversible} Markov process with stationary measure $\mu$. Then for any $f\in \DD$, we have
\[
\frac{d}{dt} \Var_\mu (P_t f)=-2 \Dir (P_t f).
\]
Moreover, if the Markov process is ergodic then
\[
\Var_\mu (f)= 2 \int_0^\infty \Dir (P_t f) \, dt. 
\]
\end{lemma}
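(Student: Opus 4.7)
The plan is to mimic the standard scalar computation while being careful with the noncommutativity that arises because, in the matrix setting, $P_t f$ and $\gen P_t f$ need not commute. The key symmetrization that makes things work is precisely the reversibility assumption, which is the matrix analogue of self-adjointness of $\gen$ on $\LL$. The identity $\E_\mu[f\gen g]=\E_\mu[\gen(f)g]$ stated in item (2) of Proposition~\ref{lem: matrix generator prop} will be applied with both slots equal to $P_t f$.

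For the first identity, I would start by noting that the mean is preserved along the semigroup: since $\frac{d}{dt}\E_\mu[P_tf]=\E_\mu[\gen P_tf]=0$ by item (3) of Proposition~\ref{lem: matrix generator prop}, we have $\E_\mu[P_tf]=\E_\mu[f]$ for all $t\geq 0$, so the term $(\E_\mu P_tf)^2$ contributes nothing to $\frac{d}{dt}\Var_\mu(P_tf)$. It then remains to differentiate $\E_\mu[(P_tf)^2]$. Writing $(P_tf)^2=(P_tf)(P_tf)$ and applying the (matrix-valued) product rule gives
\[
\frac{d}{dt}(P_tf)^2=(\gen P_tf)(P_tf)+(P_tf)(\gen P_tf),
\]
where the order matters. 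Taking expectations and applying reversibility with $g=P_tf$, one of the two terms can be rewritten as the other, yielding
\[
\frac{d}{dt}\E_\mu[(P_tf)^2]=2\,\E_\mu\big[(P_tf)(\gen P_tf)\big]=-2\,\Dir(P_tf),
\]
which is exactly the first claim. The main technical point to verify here is that the interchange of derivative and expectation is permissible and that $P_tf\in\DD$ for every $t\geq 0$, which follows from standard semigroup theory together with the commutation of $P_t$ and $\gen$.

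For the second identity, I would integrate the first identity from $0$ to $T$ and let $T\to\infty$. This gives
\[
\Var_\mu(P_Tf)-\Var_\mu(f)=-2\int_0^T\Dir(P_tf)\,dt.
\]
Under the ergodicity assumption, $P_Tf\to \E_\mu f$ in $\LL$ as $T\to\infty$, so each matrix coefficient of $P_Tf$ converges in $L^2(\mu)$ to the corresponding (scalar) coefficient of $\E_\mu f$, and consequently $\Var_\mu(P_Tf)\to 0$ in $\sym$. Passing to the limit yields the claimed formula $\Var_\mu(f)=2\int_0^\infty\Dir(P_tf)\,dt$. Note that the right-hand side is a well-defined (possibly infinite) element of $\PSD$ since $\Dir(P_tf)\in\PSD$ by item (3) of Proposition~\ref{prop: dirichlet properties}; the identity shows in particular that it is finite, bounded above by $\Var_\mu(f)$.

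The only real obstacle is the noncommutativity in the product rule step, which in the scalar case trivializes because $f$ and $\gen f$ commute. Here the two terms $(\gen P_tf)(P_tf)$ and $(P_tf)(\gen P_tf)$ are genuinely different matrix-valued functions, and without reversibility one would only be able to express the derivative as a symmetrized sum rather than as $-2\Dir(P_tf)$. Reversibility resolves this cleanly by allowing the swap inside the expectation, which is why the hypothesis is essential (and, as remarked in the statement, this is a genuinely new feature of the matrix case compared to the scalar setting).
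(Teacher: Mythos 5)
Your argument is correct and follows essentially the same route as the paper: for the first identity, both use the fact that $\E_\mu[P_t f]=\E_\mu[f]$ together with the matrix product rule and reversibility to merge the two terms into $2\E_\mu[P_tf\,\gen(P_tf)]=-2\Dir(P_tf)$; for the second, both integrate and use ergodicity, reduced to entry-wise $L^2(\mu)$ convergence of $P_t f$, to show $\Var_\mu(P_tf)\to 0$. The only place where the paper is slightly more explicit is in the limit step, where it spells out the Cauchy--Schwarz estimate on the matrix entries of $\Var_\mu(P_tf)$ rather than asserting the convergence directly as you do.
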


\begin{proof}
We start by proving the first identity. As  $\E_\mu [P_t f ]= \E_\mu [f]$ then
\begin{align*}
\frac{d}{dt} \Var_\mu [P_t f] = \frac{d}{dt} \E_\mu[ (P_t f)^2 ] = \E_\mu \Big[ \Big( \frac{d}{dt} P_t f \Big) P_t f \Big] + \E_\mu \Big[ P_t f   \Big( \frac{d}{dt} P_t f \Big)\Big] . 
\end{align*}
Noting that $\frac{d}{dt} P_t f = \gen (P_t f)$ and using reversibility, we get
\begin{align*}
\frac{d}{dt} \Var_\mu [P_t f] = \E_\mu \Big[  \gen ( P_t f) P_t f \Big] + \E_\mu \Big[ P_t f   \gen ( P_t f)\Big] = 2   \E_\mu \Big[ P_t f   \gen ( P_t f)\Big] = - 2 \Dir (P_t f). 
\end{align*}
In order to prove the second identity, we shall first prove that $\Var_{\mu} [P_t f]$ converges to zero as $t$ goes to infinity. In fact, the  ergodicity of the Markov process implies that, for any $f:\Omega \to \sym$ and any $i,j\in\{1,\ldots, d\}$, $(P_t f)_{ij}$ converges to $(\E_\mu[f])_{ij}$ in $L^2(\mu)$.  Then for any $i,j =1, \ldots d$, using Cauchy-Schwarz inequality 
\begin{align*}
\big\vert (\Var_\mu [P_t f])_{ij}\big\vert &= \Big\vert\sum_{k=1}^d \E_{\mu}[ (P_t f -\E_\mu[f])_{ik}(P_t f -\E_\mu[f])_{kj}]\Big\vert \\
&\leq \sum_{k=1}^d \big(\E_\mu[ (P_t f -\E_\mu[f])_{ik}^2]\big)^{\frac12}\big(\E_\mu[(P_t f -\E_\mu[f])_{kj}^2]\big)^{\frac12} \xrightarrow[t\rightarrow \infty]{}0
\end{align*}
and hence $\Var_\mu[P_t f] \xrightarrow[t\rightarrow \infty]{} 0 $. 
Therefore, we get by the fundamental theorem of calculus 
\[
\Var_\mu [f] = \Var_\mu [P_0 f] - \lim_{t\rightarrow \infty} \Var_\mu[P_t f]=- \int_0^\infty \frac{d}{dt} \Var_\mu [P_t f] dt= 2 \int_0^\infty \Dir (P_t f, P_t f) \, dt. 
 \]
\end{proof}

\section{Some matrix trace inequalities}\label{sec: matrix trace ineq}

The goal of this section is to establish the following trace inequality relating the matrix Dirichlet form to the Laplace transform. 

\begin{theorem}\label{th: dir-laplace}
Let $\Omega$ be a Polish space and $(X_t)_{t\geq 0}$ be a reversible Markov process with stationary measure $\mu$. Then, for any  $g: \Omega\to \sym$ belonging to the domain of $\Gamma$ and any $p\geq 1$,
$$
\Tr\Big[\big(\Dir(e^g)\big)^p\Big]\leq   \Big\Vert \Vert \Gamma(g)\Vert^p\Big\Vert_{L_\infty}\,  \Tr\big(\E_\mu[e^{2pg}]\big),
$$
where $\Dir$ and $\Gamma$ are respectively  the matrix Dirichlet form and matrix carr\'e du champ operator associated with $(X_t)_{t\geq 0}$. 
\end{theorem}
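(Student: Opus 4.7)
The plan is to reduce the claimed trace inequality to a pointwise matrix statement via convexity, and then to extract that pointwise statement from a matrix ``mean-value'' type trace inequality for the exponential map.

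First, by Proposition~\ref{prop: dirichlet properties}(2) we have $\Dir(e^g)=\E_\mu[\Gamma(e^g)]$, with $\Gamma(e^g)$ taking values in $\PSD$. Since $A\mapsto \Tr(A^p)$ is convex on $\PSD$ for every $p\geq 1$ (a classical consequence of the spectral theorem and convexity of $t\mapsto t^p$), Jensen's inequality applied to the probability measure $\mu$ yields
$$
\Tr\big[\Dir(e^g)^p\big] \;=\; \Tr\big[(\E_\mu[\Gamma(e^g)])^p\big] \;\leq\; \E_\mu \Tr\big[\Gamma(e^g)^p\big].
$$
It then suffices to prove the pointwise matrix trace bound
$$
\Tr\big[\Gamma(e^g)(x)^p\big] \;\leq\; \|\Gamma(g)(x)\|^p\,\Tr\big[e^{2pg(x)}\big],\qquad x\in\Omega,
$$
for then integrating against $\mu$, majorizing $\|\Gamma(g)(x)\|^p$ by $\big\|\|\Gamma(g)\|^p\big\|_{L_\infty}$, and commuting $\E_\mu$ with $\Tr$ on the right-hand side immediately yield the stated conclusion.

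For the pointwise step, I would use the representations of $\Gamma$ recorded in Proposition~\ref{prop: dirichlet properties}: in the finite case
$$
\Gamma(e^g)(x)=\tfrac12\sum_y Q(x,y)\big(e^{g(y)}-e^{g(x)}\big)^2,\qquad \Gamma(g)(x)=\tfrac12\sum_y Q(x,y)\big(g(y)-g(x)\big)^2,
$$
and more generally the infinitesimal form via the semigroup. The core analytic input is then a matrix trace mean-value inequality comparing $\Tr\big[(e^A-e^B)^{2p}\big]$ to $\|A-B\|^{2p}$ and the traces of $e^{2pA}$ and $e^{2pB}$ for Hermitian $A,B\in\sym$. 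The natural route is to start from the identity $e^A-e^B=\int_0^1 e^{sA}(A-B)e^{(1-s)B}\,ds$, insert it into $(e^A-e^B)^{2p}$, and then use cyclicity of the trace together with H\"older's inequality for Schatten norms in order to peel off the $(A-B)$-factors as $\|A-B\|^{2p}$ and recombine the remaining exponentials into a convex combination of $e^{2pA}$ and $e^{2pB}$; averaging this bound in $y$ against $Q(x,y)$ and using stationarity of $\mu$ would then produce the desired $\Tr\E_\mu[e^{2pg}]$ factor.

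The main obstacle is precisely this matrix mean-value trace inequality. In the scalar setting one has the elementary pointwise bound $(e^a-e^b)^2\leq (a-b)^2\max(e^{2a},e^{2b})$, after which the rest of the proof is a short calculation. In the matrix setting, the noncommutativity of $g(x)$ and $g(y)$ obstructs any such factorization, so the comparison must be performed entirely inside the trace, where cyclicity provides enough flexibility. Moreover, a putative PSD-level bound such as $\Gamma(e^g)\preceq \|\Gamma(g)\|\,e^{2g}$, even if available, could not be lifted to an inequality on $\Tr[\Gamma(e^g)^p]$, since $X\mapsto X^p$ is not operator monotone for $p>1$. Designing a clean matrix mean-value trace inequality that holds uniformly in $p\geq 1$ and retains the sharp $\|\Gamma(g)\|^p$ scaling is the technical heart of the argument, and is exactly where the new matrix trace inequality announced in the introduction must enter.
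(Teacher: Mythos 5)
Your opening Jensen step, $\Tr[\Dir(e^g)^p]\leq \E_\mu\Tr[\Gamma(e^g)^p]$, is correct and matches the first move of the paper's proof. However, the reduction you propose next — that it ``suffices to prove the pointwise matrix trace bound'' $\Tr[\Gamma(e^g)(x)^p]\leq \|\Gamma(g)(x)\|^p\,\Tr[e^{2pg(x)}]$ — cannot work, because that pointwise inequality is false already in the scalar case. On a two-state chain with $g(0)=a$, $g(1)=b$, $Q(0,1)=q$, it would demand $(e^b-e^a)^2\leq (b-a)^2 e^{2a}$, which fails badly when $b\gg a$. The underlying reason is exactly what you notice at the end of your own write-up: any mean-value estimate for $(e^A-e^B)^2$ inevitably produces a term involving $e^{2B}$, i.e.\ $e^{2g(y)}$ for the jump target $y$, and this cannot be absorbed into $e^{2g(x)}$ at a fixed $x$. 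Your plan to then ``average against $Q(x,y)$ and use stationarity'' contradicts your own pointwise reduction: stationarity and exchangeability are only available \emph{after} integrating the base point against $\mu$.

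The paper's actual route is to stop one step earlier: keep the outer integration over $X_0\sim\mu$ in place, write $\Tr[\Dir(e^g)^p]\leq\lim_{t\to 0}(2t)^{-p}\,\E_A\Tr\bigl[(\E_B[(e^A-e^B)^2\mid A])^p\bigr]$ with $(A,B)=(g(X_0),g(X_t))$, apply the mean-value inequality (Theorem~\ref{th: mean value}) \emph{conditionally on $A$} — which produces the two terms $\Tr[(\E[(A-B)e^{2B}(A-B)\mid A])^p]$ and $\Tr[(e^A\E[(A-B)^2\mid A]e^A)^p]$ — and then control the first term via Lemma~\ref{lem: operator convexity} (Hansen--Pedersen) together with the exchangeability of $(A,B)$ guaranteed by reversibility. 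That last symmetry is precisely what converts $\E\Tr[(A-B)^2 e^{2pB}]$ into a quantity dominated by $\|\Gamma(g)\|_\infty\,\E\Tr[e^{2pA}]$, and it is a global (in $\mu$) step that has no pointwise analogue. Your sketch of the trace inequality via ``cyclicity plus H\"older on Schatten norms'' also understates the difficulty: since $B$ is random and sits inside a conditional expectation, you cannot simply peel $(A{-}B)$ off as a sup norm; passing the $p$-th power inside $\E[(A-B)e^{2B}(A-B)\mid A]$ requires an operator Jensen inequality of Hansen--Pedersen type (the paper's Lemma~\ref{lem: operator convexity}), and the identity $e^A-e^B=\int_0^1 e^{sA}(A-B)e^{(1-s)B}\,ds$ alone does not obviously yield the needed convexity-friendly form — the paper first converts it to an arithmetic-mean representation via Hiai--Kosaki (Theorem~\ref{th: hiai-kosaki}) plus Choi's Schwarz inequality before any convexity can be applied.
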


The above theorem is the cornerstone of the argument relating the matrix poincar\'e inequality to matrix concentration.  In order to prove this statement, we establish some matrix trace inequalities which we believe might be of independent interest. Before stating and proving these inequalities, let us mention a key ingredient which we will rely on. 
The following statement is a particular case of the results in \cite{Hiai-Kosaki}, and provides an integral representation of the matrix logarithmic mean in terms of the matrix arithmetic mean. 

\begin{theorem}\cite[Corollary~2.4]{Hiai-Kosaki}\label{th: hiai-kosaki}
There exists a probability measure $\nu$ on $\R$ such that the following holds. 
Let $H, K$ be two $d\times d$ positive definite matrices and let $X\in \sym$. Then 
$$
\int_0^1 H^\tau XK^{1-\tau}\, d\tau= \int_{\R} H^{is} \frac{HX+XK}{2} K^{-is} \, d\nu (s).
$$
\end{theorem}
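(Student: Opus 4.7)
The plan is to reduce the claimed matrix identity to a purely scalar statement by spectral decomposition, and then to identify the required probability measure $\nu$ via its Fourier transform. Write $H=\sum_j \lambda_j P_j$ and $K=\sum_k \mu_k Q_k$ with $\lambda_j,\mu_k>0$ and $P_j,Q_k$ spectral projections. Since both sides of the claimed equality are linear in $X$ and bilinear in the spectral data of $H$ and $K$, a direct computation gives
$$
\int_0^1 H^\tau X K^{1-\tau}\,d\tau=\sum_{j,k}\frac{\lambda_j-\mu_k}{\log\lambda_j-\log\mu_k}\,P_j X Q_k,
$$
with the ratio interpreted as $\lambda_j$ when $\lambda_j=\mu_k$, and
$$
\int_{\R} H^{is}\,\frac{HX+XK}{2}\,K^{-is}\,d\nu(s)=\sum_{j,k}\frac{\lambda_j+\mu_k}{2}\,\widehat{\nu}\!\left(\log\frac{\lambda_j}{\mu_k}\right) P_j X Q_k,
$$
where $\widehat{\nu}(t):=\int_{\R}e^{its}\,d\nu(s)$. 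Matching these termwise reduces the theorem to exhibiting a probability measure $\nu$ on $\R$ whose characteristic function equals the ratio of the logarithmic mean to the arithmetic mean; a short computation with $a=e^{t},b=1$ shows this ratio is precisely
$$
\widehat{\nu}(t)=\frac{\tanh(t/2)}{t/2}.
$$

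The second step is to construct such a $\nu$. Note $\widehat{\nu}(0)=1$, so it suffices to show that $t\mapsto \tanh(t/2)/(t/2)$ is the Fourier transform of a nonnegative integrable function; its total mass will automatically be $1$. For this I would use the Mittag--Leffler partial-fraction expansion
$$
\frac{\tanh(t/2)}{t/2}=\sum_{k=0}^\infty \frac{8}{t^{2}+(2k+1)^{2}\pi^{2}},
$$
together with the elementary identity $\dfrac{1}{t^{2}+a^{2}}=\displaystyle\int_{\R}e^{its}\,\frac{e^{-a|s|}}{2a}\,ds$ for $a>0$. Summing termwise (the sum converges uniformly on $\R$) and using the series $\sum_{k\ge 0}\frac{x^{2k+1}}{2k+1}=\frac{1}{2}\log\frac{1+x}{1-x}$ with $x=e^{-\pi|s|}$ yields the explicit density
$$
d\nu(s)=\frac{2}{\pi}\,\log\coth\!\left(\frac{\pi|s|}{2}\right)ds,
$$
which is nonnegative, locally integrable (the logarithmic singularity at $0$ is mild), and has total mass $\widehat{\nu}(0)=1$. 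Thus $\nu$ is a probability measure realizing the required characteristic function.

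Putting the two steps together yields the stated identity; one only has to justify the Fubini--type interchanges, which are legitimate because of the integrability furnished by $\nu$ and the boundedness of the scalar coefficients $\lambda_{j}^{is}\mu_{k}^{-is}$. The main technical point, and the place where care is required, is the second step: recognizing that the scalar kernel $\tanh(t/2)/(t/2)$ admits an explicit \emph{positive} Fourier representation. Once this concrete density is in hand, the matrix statement reduces to the spectral bookkeeping of Step 1, which is routine.
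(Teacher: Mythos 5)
The paper does not prove Theorem~\ref{th: hiai-kosaki}; it is imported verbatim from Hiai--Kosaki, with a remark that the measure $\nu$ is made explicit in equation~2.7 of their paper. So there is no in-text argument to compare your proposal against. What you have written is a correct, self-contained reconstruction of that result, and in fact it arrives at exactly the density given in Hiai--Kosaki, namely $d\nu(s)=\tfrac{2}{\pi}\log\coth(\pi|s|/2)\,ds$.

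The two steps check out. The spectral reduction is standard and the bookkeeping is right: with $a=\lambda_j$, $b=\mu_k$ and $t=\log(a/b)$, matching the coefficient of $P_jXQ_k$ requires $\widehat\nu(t)=\dfrac{2(a-b)}{(a+b)\log(a/b)}=\dfrac{\tanh(t/2)}{t/2}$, which is what you derive. For the Bochner step, the Mittag--Leffler expansion gives $\tanh(t/2)/(t/2)=\sum_{k\ge 0}\tfrac{8}{t^2+(2k+1)^2\pi^2}$, each term is the Fourier transform of $\tfrac{4}{(2k+1)\pi}e^{-(2k+1)\pi|s|}$, and the termwise sum is legitimate by Tonelli since all integrands are nonnegative. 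Summing the series $\sum\tfrac{x^{2k+1}}{2k+1}$ at $x=e^{-\pi|s|}$ then yields $\tfrac{2}{\pi}\log\coth(\pi|s|/2)$, with the logarithmic singularity at $0$ and exponential tail giving integrability, and $\widehat\nu(0)=1$ giving total mass one. One small polish: you can avoid invoking uniform convergence entirely, since Tonelli already justifies the interchange for the nonnegative series; and the final Fubini step reassembling the matrix identity is justified because $\nu$ is a finite measure and the scalar coefficients $\lambda_j^{is}\mu_k^{-is}$ have modulus one. Overall this is a clean and correct independent proof of the cited result.
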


The probability measure $\nu$ in the above statement can be made explicit (see equation~2.7 in \cite{Hiai-Kosaki}), however this won't be needed for our purposes. 
Let us mention that the above is only one of several integral representations established by Hiai and Kosaki in \cite{Hiai-Kosaki}.

Let us recall that given a function $f:\, \R\to \R$, it can be extended to a function on Hermitian matrices by applying it to each eigenvalue of the matrix. 
More precisely, if $A= \sum_{i=1}^d \lambda_i v_iv_i^*$ is the spectral decomposition of $A\in \sym$, then one defines $f(A)= \sum_{i=1}^d f(\lambda_i) v_iv_i^*$. 
The main trace inequality used to prove Theorem~\ref{th: dir-laplace} is the following.

\begin{theorem}\label{th: mean value}
Let $A$ be a $d\times d$ Hermitian (deterministic) matrix and let $B$ be a $d\times d$ Hermitian random matrix. Then for 
every increasing, convex function $f: \R_+ \to \R_+$, 
$$
\Tr\Big[f\Big(\E[(e^A-e^B)^2]\Big)\Big]\leq \frac12 \Tr\Big[f\big(\E[(A-B)e^{2B}(A-B)]\big)\Big]+\frac12\Tr\Big[f\big(e^{A}\E[(A-B)^2]e^{A}\big)\Big]. 
$$
In particular, the above holds for $f:\, t\to t^p$ when $p\geq 1$. 
\end{theorem}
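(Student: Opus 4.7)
The plan is to reduce the trace estimate to a single positive semi-definite inequality for $\E[M^2]$, where $M := e^A - e^B$ (which is Hermitian, so $M^2 = MM^*$), and then invoke the monotonicity and convexity of $\Tr\circ f$ on $\PSD$ (monotonicity follows because $f$ is increasing and Weyl's inequality orders eigenvalues; convexity is standard for scalar convex $f$). Setting $X := e^A(A-B)$ and $Y := (A-B)e^B$, I aim to prove the PSD bound
$$
\E[M^2] \preceq \int_{\R} e^{isA}\, Z_0\, e^{-isA}\, d\nu(s), \qquad Z_0 := \tfrac12\, e^A\, \E[(A-B)^2]\, e^A + \tfrac12\, \E[(A-B)e^{2B}(A-B)],
$$
with $\nu$ the probability measure from Theorem~\ref{th: hiai-kosaki}. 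Granted this, since $e^{isA}$ is unitary one has $\Tr[f(e^{isA} Z_0 e^{-isA})] = \Tr[f(Z_0)]$; chaining monotonicity of $\Tr\circ f$, Jensen for the $\nu$-integral, and one final application of convexity of $\Tr\circ f$ to split $Z_0$ produces exactly the two terms on the right-hand side of the theorem.

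To establish the PSD bound I start from the identity $e^A - e^B = \int_0^1 e^{\tau A}(A-B)e^{(1-\tau)B}\, d\tau$, obtained by differentiating $\tau \mapsto e^{\tau A}e^{(1-\tau)B}$. Theorem~\ref{th: hiai-kosaki}, applied with $H = e^A$, $K = e^B$, and the choice $X = A-B$, rewrites this as
$$
M = \int_{\R} e^{isA}\, \tfrac{X+Y}{2}\, e^{-isB}\, d\nu(s).
$$
The map $Z \mapsto ZZ^*$ is operator convex (from $(Z-W)(Z-W)^* \succeq 0$), so Jensen's inequality for the probability measure $\nu$ gives
$$
MM^* \preceq \int_{\R} e^{isA}\, \tfrac{X+Y}{2}\, e^{-isB}e^{isB}\, \tfrac{X^*+Y^*}{2}\, e^{-isA}\, d\nu(s) = \int_{\R} e^{isA}\, \tfrac{(X+Y)(X^*+Y^*)}{4}\, e^{-isA}\, d\nu(s),
$$
the random unitary factor $e^{-isB}$ having cancelled against its adjoint. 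The operator AM--GM inequality $XY^*+YX^*\preceq XX^*+YY^*$ (equivalent to $(X-Y)(X-Y)^*\succeq 0$) then yields $(X+Y)(X^*+Y^*)\preceq 2(XX^*+YY^*)$. Since $A$ is deterministic, $\E[XX^*]=e^A\E[(A-B)^2]e^A$ and $\E[YY^*]=\E[(A-B)e^{2B}(A-B)]$; pulling the expectation through the $\nu$-integral and the outer conjugation yields the desired PSD estimate.

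The main obstacle, and the reason Theorem~\ref{th: hiai-kosaki} is needed at all, is that a direct Jensen bound applied to the naive representation $e^A - e^B = \int_0^1 e^{\tau A}(A-B)e^{(1-\tau)B}\, d\tau$ produces non-unitary random weights $e^{(1-\tau)B}$ whose squares cannot be cleanly decoupled from the random matrix $(A-B)$. The Hiai--Kosaki identity is precisely what unlocks the argument: it replaces the weight $e^{\tau A}(\cdot)e^{(1-\tau)B}$ by the \emph{unitary} conjugation $e^{isA}(\cdot)e^{-isB}$. The $B$-side unitary then disappears automatically inside the Jensen step, while the $A$-side unitary is deterministic and is rendered harmless once we pass to $\Tr\circ f$ via unitary invariance of the spectrum. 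This ``unitarization'' is the decisive trick making the trace estimate emerge cleanly, and the particular case $f(t)=t^p$ for $p\geq 1$ follows since such $f$ is both increasing and convex on $\R_+$.
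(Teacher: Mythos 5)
Your argument is correct and essentially identical to the paper's: the same integral representation, the same application of Hiai--Kosaki (\ref{th: hiai-kosaki}), the operator Schwarz/Jensen step for $Z\mapsto ZZ^*$ (the paper cites Choi's inequality, which is the same fact), the same AM--GM splitting into $XX^*$ and $YY^*$, and the same chain of monotonicity, unitary invariance, and convexity of $\Tr\circ f$. The only difference is bookkeeping: you push the expectation and the AM--GM split inside the $\nu$-integral before passing to $\Tr\circ f$, while the paper keeps the $\nu$-average outermost and splits last, but the ingredients and their order of logical dependence are the same.
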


\begin{proof}
First, note that 
$\frac{d}{d\tau} e^{\tau A}e^{(1-\tau)B}= e^{\tau A}(A-B)e^{(1-\tau)B}$. Therefore, we can write 
$$
e^A-e^B= \int_{0}^1\frac{d}{d\tau}  e^{\tau A}e^{(1-\tau)B}\, d\tau= \int_{0}^1e^{\tau A}(A-B)e^{(1-\tau)B} \, d\tau.
$$
Denoting $C:=A-B$ and using Theorem~\ref{th: hiai-kosaki}, we have 
$$
e^A-e^B= \int_{\R} e^{isA} \frac{e^{A}C+Ce^{B}}{2} e^{-isB} \, d\nu (s):=  \int_{\R} \Gamma(s) \, d\nu (s).
$$
Using \cite[Corollary~2.8]{Choi}, we have that
$$
\int_{\R} \Gamma(s) \, d\nu (s) \cdot \int_{\R} \Gamma(s)^* \, d\nu (s) \preceq \int_{\R} \Gamma(s) \Gamma(s)^*\, d\nu (s).
$$
Putting the above together, we get 
\begin{align*}
(e^A-e^B)^2&\preceq  \int_{\R} e^{isA} \Big(\frac{e^{A}C+Ce^{B}}{2}\Big)\cdot  \Big(\frac{e^{A}C+Ce^{B}}{2}\Big)^* e^{-isA} \, d\nu (s)\\
&\preceq \int_{\R} e^{isA} \frac{e^{A}C^2e^{A}+Ce^{2B}C}{2} e^{-isA} \, d\nu (s),
\end{align*}
where the last inequality follows after using that $(e^{A}C+Ce^{B})(e^{A}C+Ce^{B})^*\preceq 2(e^{A}C)(e^{A}C)^*+2(Ce^{B})(Ce^{B})^*$. 

Since $f$ is continuous, convex monotone increasing, then so is $\Tr\circ f$ (see for instance \cite[Theorem~2.10]{Carlen}). Therefore, recalling that $A$ is deterministic, 
we have 
\begin{align*}
\Tr\Big[f\Big(\E[(e^A-e^B)^2]\Big)\Big]&\leq  \Tr\Big[ f\Big(\int_{\R} e^{isA} \E\Big[\frac{e^{A}C^2e^{A}+Ce^{2B}C}{2}\Big] e^{-isA} \, d\nu (s)\Big)\Big]\\
&\leq  \int_{\R} \Tr\Big[ f\Big(e^{isA} \E\Big[\frac{e^{A}C^2e^{A}+Ce^{2B}C}{2}\Big] e^{-isA} \Big)\Big]\, d\nu (s),
\end{align*}
where the first inequality uses the monotonicity of $\Tr\circ f$, and the second its convexity. 
Now note that $e^{isA}$ is unitary, therefore using that $\Tr\circ f$ is unitary invariant, we get 
\begin{align*}
\Tr\Big[f\Big(\E[(e^A-e^B)^2]\Big)\Big]&\leq  \Tr\Big[ f\Big(\E\Big[\frac{e^{A}C^2e^{A}+Ce^{2B}C}{2}\Big] \Big)\Big]\\
&\leq  \frac12 \Tr\Big[f\big(\E[(A-B)e^{2B}(A-B)]\big)\Big]+\frac12\Tr\Big[f\big(e^{A}\E[(A-B)^2]e^{A}\big)\Big],
\end{align*}
where the last inequality follows by convexity of $\Tr\circ f$. 
\end{proof}

The above theorem seems new even when both $A$ and $B$ are deterministic. For instance, we get that for any $p\in \N$
$$
\Tr\Big[(e^A-e^B)^{2p}\Big]\leq \frac12\Tr\Big[\big((A-B)^2e^{2B}\big)^{p}\Big]+\frac12\Tr\Big[\big(e^{2A}(A-B)^2\big)^{p}\Big]. 
$$
The case $p=1$ is related to some of the results in \cite{PMT-EffronStein} even though it is incomparable to them and cannot be derived from them.  

To deduce Theorem~\ref{th: dir-laplace} from Theorem~\ref{th: mean value}, we will need the following lemma which relies on an operator convexity inequality from \cite{Hansen-Pedersen}. 

\begin{lemma}\label{lem: operator convexity}
Let $K\in \sym$ and $Z\in \PSD$ be  random matrices, not necessarily independent, and assume that $\E[ K^2]\preceq Id$. Then for any $p\geq 1$, we have 
$$
\E[KZK]\preceq \big(\E[KZ^pK]\big)^{\frac{1}{p}}.
$$ 
In particular, 
$$
\Tr\Big[\big(\E[KZK]\big)^p\Big] \leq \E \Tr[KZ^pK].
$$
\end{lemma}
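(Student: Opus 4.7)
The plan is to reduce the first (operator) inequality to the Hansen--Pedersen--Jensen inequality applied to the operator concave map $f(t)=t^{1/p}$, and then to deduce the second (trace) inequality by monotonicity of $A\mapsto \Tr(A^p)$ on $\PSD$.

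Since $p\geq 1$, set $q:=1/p\in(0,1]$ and $Y:=Z^p\in \PSD$ (defined via functional calculus), so that $Z=Y^q$. The desired inequality becomes
\[
\E[KY^qK]\preceq \bigl(\E[KYK]\bigr)^q.
\]
The function $f(t)=t^q$ with $q\in(0,1]$ is operator concave on $[0,\infty)$ and satisfies $f(0)=0$. The Hansen--Pedersen inequality from \cite{Hansen-Pedersen}, in its integral formulation, states that for any measurable family of Hermitian operators $A_\omega$ with $\int A_\omega^2\,d\mathbb{P}(\omega)\preceq \mathrm{Id}$ and any measurable family $X_\omega\in\PSD$,
\[
\int A_\omega\, f(X_\omega)\, A_\omega\,d\mathbb{P}(\omega)\preceq f\Bigl(\int A_\omega X_\omega A_\omega\,d\mathbb{P}(\omega)\Bigr).
\]
I would apply this with $A_\omega=K(\omega)$ and $X_\omega=Y(\omega)$: since $K$ is Hermitian, $A_\omega^*A_\omega=K^2$, so the normalization is exactly the hypothesis $\E[K^2]\preceq \mathrm{Id}$, and the conclusion is the first claimed inequality.

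For the trace inequality, I would combine the operator inequality just proved with the fact that $A\mapsto \Tr(A^p)$ is monotone on $\PSD$ for $p\geq 1$: indeed, $A\preceq B$ implies $\lambda_i(A)\leq\lambda_i(B)$ for every $i$ by Weyl's inequalities, whence $\Tr(A^p)=\sum_i\lambda_i(A)^p\leq\sum_i\lambda_i(B)^p=\Tr(B^p)$ by monotonicity of $t\mapsto t^p$ on $\R_+$. Applying this with $A=\E[KZK]$ and $B=(\E[KZ^pK])^{1/p}$, together with the identity $\Tr\bigl((X^{1/p})^p\bigr)=\Tr(X)$ for $X\in\PSD$ and linearity of the trace, gives
\[
\Tr\bigl[\bigl(\E[KZK]\bigr)^p\bigr]\leq \Tr\bigl[\E[KZ^pK]\bigr]=\E\,\Tr[KZ^pK].
\]

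The principal delicate point is to cite the Hansen--Pedersen--Jensen inequality in exactly the right form: the normalization must be an inequality (not equality), which is legitimate precisely because $f(0)=0$; and we need the integral (random) form, which is a routine extension of the finite-sum case via discrete approximation. Beyond this, no noncommutativity issue arises: the inequality controls the sandwich $KY^qK$ without any commutation assumption between $K$ and $Z$, and the non-independence of $K$ and $Z$ is automatically handled since one works with the joint distribution.
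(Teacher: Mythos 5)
Your proof is correct and follows essentially the same route as the paper: both rest on the Hansen--Pedersen Jensen operator inequality applied to the operator concave function $t\mapsto t^{1/p}$, with the normalization hypothesis $\E[K^2]\preceq \mathrm{Id}$ giving the required (sub-unital) positive contraction, and both then deduce the trace inequality from the monotonicity of $A\mapsto \Tr(A^p)$ on $\PSD$. The only cosmetic difference is that the paper packages the argument by exhibiting the positive linear contraction $\Phi(Z)=\E[KZK]$ on the $C^*$-algebra of bounded random Hermitian matrices and citing the contraction form of Hansen--Pedersen, whereas you invoke the equivalent ``integral field'' formulation of the same Jensen inequality.
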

\begin{proof}
By a truncation argument, we may suppose without loss of generality that $Z$ is uniformly bounded. 
Let $\mathcal{A}$ be the $C^*$-algebra of uniformly bounded random Hermitian $d\times d$ matrices and let $K\in \sym$ be such that $\E[K^2]\preceq Id$.  Consider  $\Phi: \, \mathcal{A}\to \sym$ defined by 
$$
\Phi(Z)= \E[KZK].
$$
Clearly, $\Phi$ is linear and positive i.e. $\Phi(Z)\succeq 0$ if $Z\succeq 0$, and $\Phi(0)=0$. Moreover, if $Z$ is uniformly bounded by $1$, then using that $\E[K^2]\preceq Id$ we see that the operator norm of $\Phi(Z)$ is bounded by $1$. Thus, $\Phi$ is a contraction and we can apply \cite[Corollary~2.2]{Hansen-Pedersen} to deduce that for any $p\geq 1$ and any $Z\in \mathcal{A}$ 
$$
\Phi(Z^{\frac{1}{p}})\preceq \Phi(Z)^{\frac{1}{p}},
$$
where we have used that the function $t\to t^{\frac{1}{p}}$ is operator concave for $p\geq 1$. The first claim then follows after a change of variables, while the second uses that 
$\Tr\circ (\cdot)^p$ is operator monotone. 
\end{proof}

With the help of Lemma~\ref{lem: operator convexity}, we are now ready to show how Theorem~\ref{th: mean value} implies Theorem~\ref{th: dir-laplace}. 

\begin{proof}[Proof of Theorem~\ref{th: dir-laplace}]
Using Proposition~\ref{prop: dirichlet properties}, we start writing 
$$
\Tr\Big[\big(\Dir(e^g)\big)^p\Big]=\lim_{t\to 0}\frac{1}{(2t)^p} \Tr\Big[\big(\E\big(e^{ g(X_t)}-e^{ g(X_0)}\big)^2\big)^p\Big].
$$
Let $(A,B):=\big(g(X_0),g(X_t)\big)$ and note that it follows from the reversibility of $(X_t)_{t\geq 0}$ that $(A,B)$ is an exchangeable pair. Since $\Tr [(\cdot)^p]$ is operator convex, then Jensen's inequality implies that 
\begin{equation}\label{eq: proof-dir-laplace}
\Tr\Big[\big(\Dir(e^g)\big)^p\Big]\leq \lim_{t\to 0}\frac{1}{(2t)^p} \E\Tr\Big[\big(\E\big[\big(e^{A}-e^{B}\big)^2\mid A\big]\big)^p\Big].
\end{equation}
Denote $\alpha=\Big\Vert \E[(A-B)^2\mid A]\Big\Vert_{L^\infty(A)}$. Applying Theorem~\ref{th: mean value} conditionally on $A$, we get 
$$
\E \Tr\Big[\big(\E[(e^A-e^B)^2\mid A]\big)^{p}\Big]\leq
\frac12 \E\Tr\Big[\big(\E[(A-B)e^{2B}(A-B)\mid A]\big)^{p}\Big]+\frac12 \Tr\Big[\big(e^{A}\E[(A-B)^2\mid A]e^{A}\big)^{p}\Big].
$$
Now note that $e^{A}\E[(A-B)^2\mid A]e^{A}\preceq \Vert \E[(A-B)^2\mid A]\Vert\, e^{2A}$ for any realization of $A$. Therefore, using the monotonicity of $\Tr[(\cdot)^p]$, we get that 
$$
\Tr\Big[\big(e^{A}\E[(A-B)^2\mid A]e^{A}\big)^{p}\Big] 
\leq \alpha^p\, \Tr(e^{2pA}),
$$
for all realizations of $A$. 
On the other hand, using Lemma~\ref{lem: operator convexity} conditionally on $A$ with $K= \frac{A-B}{\sqrt{\alpha}}$ and $Z=e^{2B}$, we have 
$$
\Tr\Big[\big(\E[(A-B)e^{2B}(A-B)\mid A]\big)^{p}\Big]
\leq \Vert\E[(A-B)^2\mid A]\Vert^{p-1} \Tr\Big[\E[(A-B)e^{2pB}(A-B)\mid A]\Big],
$$
for all realizations of $A$. Combining these estimates, we deduce that 
\begin{align*}
\E\Tr\Big[\big(\E[(e^A-e^B)^2\mid A]\big)^{p}\Big]&\leq
\frac12 \alpha^{p-1} \E\Tr\Big[\E[(A-B)e^{2pB}(A-B)\mid A]\Big]+ \frac12 \alpha^p\, \Tr(e^{2pA})\\
&= \frac12\alpha^{p-1}\E \Tr\Big[(A-B)^2e^{2pB}\Big]+\frac12\alpha^{p}\, \E\Tr(e^{2pA}).
\end{align*}
Using the exchangeability of $A$ and $B$, we have 
$$
\E \Tr\Big[(A-B)^2e^{2pB}\Big]\leq \alpha\,  \E\Tr(e^{2pA}).
$$
Combining the two previous estimates, we deduce that
$$
\E\Tr\Big[\big(\E[(e^A-e^B)^2\mid A]\big)^{p}\Big]\leq \Big\Vert \E[(A-B)^2\mid A]\Big\Vert_{L^\infty}^p\, \E\Tr(e^{2pA}).
$$ 
Replacing $A$ and $B$ by their values, and plugging back the above inequality in \eqref{eq: proof-dir-laplace}, we get 
$$
\Tr\Big[\big(\Dir(e^g)\big)^p\Big]\leq  \Big\Vert \lim_{t\to 0}\frac{1}{2t} \E[\big(g(X_t)-g(X_0)\big)^2\mid X_0]\Big\Vert_{L^\infty}^p\, \E_\mu\Tr(e^{2pg}).
$$
Using Proposition~\ref{prop: dirichlet properties}, we have 
$$
 \lim_{t\to 0}\frac{1}{2t} \E[\big(g(X_t)-g(X_0)\big)^2\mid X_0]= \Gamma(g),
$$
which after replacement in the previous inequality, finishes the proof. 
\end{proof}

\section{From matrix Poincar\'e inequality to matrix concentration}\label{sec: poincare to concentration}

The goal of this section is to prove Theorem~\ref{th: poincare to concentration}. Like usual, providing a bound on the Laplace transform will be sufficient to derive the corresponding concentration inequality through the use of a Chernoff bound. More precisely, we prove the following. 

\begin{theorem}\label{th: laplace transform}
Let $\mu$ be a probability measure on some Polish space $\Omega$. Suppose that $\mu$ satisfies a matrix Poincar\'e inequality with constant $\alpha$ and matrix Markov generator $\gen$ reversible with respect to $\mu$. Given $f: \Omega\to \sym$ belonging to the domain of $\Gamma$, denote 
$$
v_f=\sup \big\Vert \Gamma(f)\big\Vert.$$
Suppose that $e^{\lambda f}$ has matrix coefficients in $\rm{L}^1(\mu)$ for any $\lambda>0$. 
Then, for any $\delta< \sqrt{\frac{2}{\alpha v_f}}$, we have 
$$
\E_{\mu}\Tr(e^{\delta(f-\E_\mu f)})\leq \frac{2d}{2-\alpha v_f\delta^2}.
$$
\end{theorem}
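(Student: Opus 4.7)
The plan is to adapt the Aida--Stroock argument to the matrix setting, using the trace inequality of Theorem~\ref{th: dir-laplace} as the bridge between the matrix Dirichlet form and the Laplace transform. First I would replace $f$ by $f-\E_\mu f$; this is legitimate because $\Gamma$ is invariant under translations by scalar multiples of the identity, so $v_f$ is preserved. Then, setting $g = e^{\delta f/2}$, I would apply the matrix Poincar\'e inequality to obtain
\[
\Var_\mu(e^{\delta f/2}) \preceq \alpha\, \Dir(e^{\delta f/2}).
\]

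Taking the trace turns the left-hand variance into $\E_\mu\Tr(e^{\delta f}) - \Tr(\E_\mu e^{\delta f/2})^2$ since $(e^{\delta f/2})^2 = e^{\delta f}$. Writing $Z(\delta):= \E_\mu \Tr(e^{\delta f})$ and invoking Theorem~\ref{th: dir-laplace} with $p=1$, together with the homogeneity $\Gamma(\delta f/2) = (\delta^2/4)\,\Gamma(f)$ from Proposition~\ref{prop: dirichlet properties}, controls the right-hand side by $(\alpha v_f \delta^2/4)\, Z(\delta)$. For the remaining cross term, the elementary estimate $\Tr M^2 \leq (\Tr M)^2$ for $M\succeq 0$ yields $\Tr(\E_\mu e^{\delta f/2})^2 \leq Z(\delta/2)^2$. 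Combining these inputs produces the recursive inequality
\[
Z(\delta)\bigg(1 - \frac{\alpha v_f \delta^2}{4}\bigg) \leq Z(\delta/2)^2, \qquad \text{valid for } \delta^2 < \frac{4}{\alpha v_f}.
\]

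The next step is to iterate this recursion and solve it in closed form. In the scalar case $d=1$ this is immediate because $Z(\delta/2^n)^{2^n} \to 1$ as $n\to\infty$. The matrix case is more delicate: since $Z(0) = d > 1$, the naive telescoping product $Z(\delta/2^n)^{2^n}$ explodes like $d^{2^n}$, so the iteration does not close by itself. To extract the stated form $Z(\delta) \leq 2d/(2 - \alpha v_f\, \delta^2)$, I would supplement the iteration with the direct matrix Poincar\'e bound $\Var_\mu(f) \preceq \alpha v_f\, I$ (obtained by applying Poincar\'e to $f$ itself and using $\Gamma(f) \preceq v_f I$), whose trace gives $\Tr \E_\mu(f-\E_\mu f)^2 \leq d\,\alpha v_f$. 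Feeding this into the Taylor expansion $Z(\delta/2^n) = d + (\delta^2/(2\cdot 4^n))\, \Tr\E_\mu(f-\E_\mu f)^2 + O((\delta/2^n)^3)$ for small arguments, and carefully balancing the depth of iteration against these correction terms, should close the argument.

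The main obstacle is precisely this iterative/solution step: the paper signals it explicitly in the introduction, noting that the square function is not operator monotone so the scalar Aida--Stroock iteration does not transfer as is, and that the remedy combines the \emph{operator monotonicity of the trace} $\Tr[(\cdot)^p]$ with \emph{special convexity arguments}. The crux will be handling the factor of $d$ introduced by the trace step $\Tr M^2 \leq (\Tr M)^2$ without letting it compound exponentially through the recursion. Once the bound on $Z(\delta)$ is secured, the concentration statement of Theorem~\ref{th: poincare to concentration} will follow from a Chernoff step $\Prob(\lambda_{max}(f-\E_\mu f) \geq t) \leq e^{-\delta t}\, \E_\mu \Tr(e^{\delta(f-\E_\mu f)})$ and optimization in $\delta \in (0, \sqrt{2/(\alpha v_f)})$.
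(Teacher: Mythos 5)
Your setup and the plan to use Theorem~\ref{th: dir-laplace} as the bridge between the Dirichlet form and the Laplace transform are on target, and your very first application of the Poincar\'e inequality to $e^{\delta f/2}$ is exactly how the paper begins. However, the step where you pass to the scalar quantity $Z(\delta) := \E_\mu \Tr(e^{\delta f})$ via $\Tr M^2 \leq (\Tr M)^2$ creates a genuine and unfixable gap for the route you describe. Telescoping $Z(\delta) \leq Z(\delta/2^n)^{2^n}/(\cdots)$ necessarily introduces a factor of $d^{2^n}$, since $Z(\delta/2^n) = d + O(4^{-n})$ makes $Z(\delta/2^n)^{2^n} \sim d^{2^n}$. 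You correctly observe this explosion, but your suggested remedy — balance the iteration depth against Taylor correction terms — cannot repair it: the leading constant $d^{2^n}$ is already present in the $0$-th order term and no finite truncation of the recursion removes it, while stopping at finite depth leaves a bound that does not match the stated conclusion for $\delta$ near the threshold $\sqrt{2/(\alpha v_f)}$.

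What the paper actually does is avoid the trace-squaring step entirely and keep all the matrix structure inside a single trace throughout the recursion. Concretely, Lemma~\ref{lem: induction-laplace} asserts, for $g$ centered with $\alpha v_g < 1$ and any $p\in\N$,
\[
\Tr\Big[\big(\E_\mu[e^{2g}]\big)^{p}\Big] \leq \frac{1}{(1-\alpha v_g)^{p-1}} \Tr\Big[\big(\E_\mu[e^{g}]\big)^{2p}\Big] + \alpha v_g\, \Tr\big[\E_\mu[e^{2pg}]\big].
\]
This is obtained by first invoking the operator monotonicity of $\Tr[(\cdot)^p]$ to apply the matrix Poincar\'e inequality inside the trace power, then using the weighted trace-convexity inequality $\Tr[(A+B)^p] \leq (\tfrac{\gamma}{\gamma-1})^{p-1}\Tr(A^p) + \gamma^{p-1}\Tr(B^p)$ of Lemma~\ref{lem: weighted convexity}, and finally Theorem~\ref{th: dir-laplace} at general $p$ (not just $p=1$). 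The recursion is then driven on the quantities $a_s := \Tr\big[(\E_\mu e^{\delta f/2^s})^{2^s}\big]$ with $g = \delta f/2^{s+1}$ and $p = 2^s$ at the $s$-th step. The crucial payoff is that $a_s \to \Tr(e^{\delta\E_\mu f}) = d$ as $s\to\infty$ — only a single $d$, not $d^{2^s}$ — precisely because the matrix power is taken before the trace. Your proposal identifies this as the missing ingredient ("operator monotonicity of the trace combined with special convexity arguments") but never instantiates it, and the Taylor-expansion substitute does not serve as a replacement.
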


With this bound in hand, we can easily derive the concentration inequality promised in Theorem~\ref{th: poincare to concentration}. 
\begin{proof}[Proof of Theorem~\ref{th: poincare to concentration}]
The proof follows the standard method initiated by Ahlswede-Winter \cite{AW}. Using Chernoff bound, we write 
$$
\mu\Big( \lambda_{max}\big(f-\E_\mu  f\big)\geq t\Big)\leq 
\inf_{\delta\in \R_+} e^{-\delta t}\E_{\mu}\Tr(e^{\delta(f-\E_\mu f)}).
$$
Using Theorem~\ref{th: laplace transform}, we get 
$$
\mu\Big( \lambda_{max}\big(f-\E_\mu  f\big)\geq t\Big)\leq 
\inf_{\delta<\sqrt{\frac{2}{\alpha v_f}}} \frac{2de^{-\delta t}}{2-\alpha v_f \delta^2}\leq \inf_{\delta<\sqrt{\frac{2}{\alpha v_f}}} d\cdot e^{-\delta t 
+\frac{\alpha v_f\delta^2}{2-\sqrt{2\alpha v_f} \delta}}.
$$
 Choosing $\delta=\frac{t}{\alpha v_f+ t\sqrt{\alpha v_f/2}}$, we get the result. 
\end{proof}

The rest of this section is devoted to the proof of Theorem~\ref{th: laplace transform}. As explained in the introduction, the key is to establish some recursive relation involving the Laplace transform. 
To this aim, the result of the previous section relating the matrix Dirichlet form to the Laplace transform will play a crucial role in the derivation of such recursive formula. 
We start with the following elementary lemma.

\begin{lemma}\label{lem: weighted convexity}
Let $A$ and $B$ be two Hermitian matrices and $\gamma>1$. Then, for any $p\in \N$, we have 
$$
\Tr\big[(A+B)^p\big]\leq \Big(\frac{\gamma}{\gamma-1}\Big)^{p-1} \Tr(A^p)+ \gamma^{p-1}\Tr(B^p).
$$
\end{lemma}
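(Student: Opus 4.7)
The plan is to reduce the matrix inequality to its scalar analogue via the triangle inequality for the Schatten $p$-norm, and then invoke a weighted Young-type inequality on scalars, which itself is immediate from convexity of $t \mapsto t^p$.

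The natural setting is $A, B \succeq 0$ (which is what is needed in the application to Theorem~\ref{th: laplace transform}, where one takes the $p$-th power of a sum of PSD quantities such as $(\E_\mu e^g)^2$ and $\alpha\, \Dir(e^g)$). Under this assumption, the Schatten $p$-norm of a PSD matrix $X$ is $\|X\|_p = (\Tr X^p)^{1/p}$, so Minkowski's inequality (valid for $p \geq 1$) reads
\[
\big(\Tr (A+B)^p\big)^{1/p} \leq \big(\Tr A^p\big)^{1/p} + \big(\Tr B^p\big)^{1/p}.
\]
This is the only nontrivial ingredient; it is a classical consequence of Hölder/duality for Schatten norms.

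Next, set $a := (\Tr A^p)^{1/p}$ and $b := (\Tr B^p)^{1/p}$, both nonnegative scalars. Writing the convex combination
\[
a + b \;=\; \Big(1 - \tfrac{1}{\gamma}\Big) \cdot \tfrac{\gamma}{\gamma-1}\, a \;+\; \tfrac{1}{\gamma} \cdot \gamma\, b,
\]
and applying Jensen's inequality to the convex function $x \mapsto x^p$ on $\R_+$ (convex since $p \geq 1$), we obtain
\[
(a+b)^p \;\leq\; \Big(1 - \tfrac{1}{\gamma}\Big)\Big(\tfrac{\gamma}{\gamma-1}\Big)^{p} a^p \;+\; \tfrac{1}{\gamma}\, \gamma^{p}\, b^p \;=\; \Big(\tfrac{\gamma}{\gamma-1}\Big)^{p-1} a^p \;+\; \gamma^{p-1} b^p.
\]
Raising the Minkowski step to the $p$-th power and substituting into the scalar bound yields the claimed inequality.

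The proof is essentially routine; I do not anticipate any serious obstacle. The only judgment calls are: (i) restricting to the PSD setting (which, as noted, is the only case used in the paper), and (ii) recognising that the parameter $\gamma>1$ enters only via the scalar convex-combination weights $1-1/\gamma$ and $1/\gamma$, so the shape of the constants $(\gamma/(\gamma-1))^{p-1}$ and $\gamma^{p-1}$ falls out automatically from Jensen without any matrix manipulation.
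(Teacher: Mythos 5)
Your proof is correct in the positive semi-definite setting, which is indeed the only case used in the paper (and, as you implicitly note, the lemma as stated for arbitrary Hermitian $A,B$ and odd $p$ would actually be false, so some such restriction is needed anyway). However, your argument takes a genuinely different route from the paper's. You first strip away the matrix structure with Minkowski's inequality for Schatten $p$-norms, $\big(\Tr(A+B)^p\big)^{1/p}\leq\big(\Tr A^p\big)^{1/p}+\big(\Tr B^p\big)^{1/p}$, and then finish with an ordinary scalar Jensen with weights $1-1/\gamma$ and $1/\gamma$. The paper instead stays entirely on the matrix side: it sets $\widetilde A=\tfrac{\gamma}{\gamma-1}A$, $\widetilde B=\gamma B$, writes $A+B=\tfrac{\gamma-1}{\gamma}\widetilde A+\tfrac{1}{\gamma}\widetilde B$, and applies convexity of $X\mapsto\Tr(X^p)$ to this convex combination in a single step, then substitutes back. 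Each route invokes exactly one nontrivial trace inequality (Schatten–Minkowski for you, convexity of $\Tr\circ\phi$ for convex $\phi$ for the paper), so neither is materially shorter; your version is slightly more modular, isolating the matrix content in a standard norm inequality and pushing all the $\gamma$-bookkeeping to scalars, while the paper's is a one-line matrix-native argument that has the advantage of not detouring through $p$-th roots and so also works verbatim for even $p$ without any positivity assumption.
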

\begin{proof}
Let $\w A= \frac{\gamma}{\gamma-1} A$ and $\w B=\gamma B$. With these notations, we have 
$$
\Tr\big[(A+B)^p\big]= \Tr\Big[\big(\frac{\gamma-1}{\gamma}\w A+\frac{1}{\gamma}\w B\big)^p\Big].
$$
Using the convexity of $\Tr[(\cdot)^p]$, we get
$$
\Tr\big[(A+B)^p\big]
\leq \frac{\gamma-1}{\gamma} \Tr({\w A}^p)+ \frac{1}{\gamma}\Tr({\w B}^p).
$$
Replacing $\w A$ and $\w B$ by their expressions, we finish the proof. 
\end{proof}

The next lemma will help us implement an induction argument to prove Theorem~\ref{th: laplace transform}. 

\begin{lemma}\label{lem: induction-laplace}
Let $\mu$ be a probability measure on some Polish space $\Omega$. Suppose that $\mu$ satisfies a matrix Poincar\'e inequality with constant $\alpha$ 
and matrix Markov generator $\gen$ reversible with respect to $\mu$. Given $g:\Omega\to \sym$ with $\E_\mu[g]=0$, let $$
v_g= \sup \big\Vert \Gamma(g)\big\Vert.
$$
If $\alpha v_g<1$, then for any $p\in \N$ we have 
$$
\Tr\Big[ \big(\E_\mu [e^{2g}]\big)^{p}\Big] 
\leq \frac{1}{(1- \alpha v_g)^{p-1}}\Tr\Big[ \big(\E_\mu[e^g]\big)^{2p}\Big] 
+\alpha v_g \Tr\Big[ \E_\mu[ e^{2p g}]\Big]. 
$$
\end{lemma}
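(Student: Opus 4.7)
My plan is to combine three ingredients already in the paper: the matrix Poincar\'e inequality applied to $e^g$, the weighted trace-convexity of Lemma~\ref{lem: weighted convexity}, and the trace--Dirichlet bound of Theorem~\ref{th: dir-laplace}. The key conceptual point, as foreshadowed in the introduction, is that although the square is not operator monotone (which would prevent a direct iteration of the Poincar\'e inequality), the map $A\mapsto \Tr(A^p)$ is monotone on the PSD cone for $p\geq 1$, and this is what allows us to pass from a PSD inequality to a clean scalar recursive bound.

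Concretely, I would first apply the matrix Poincar\'e inequality to the Hermitian function $e^g$ to obtain
$$\E_\mu[e^{2g}] \;\preceq\; (\E_\mu e^g)^2 + \alpha\, \Dir(e^g).$$
Both sides are PSD (the left by definition; the Dirichlet form by Proposition~\ref{prop: dirichlet properties}), and since $A\preceq B$ in the PSD sense implies $\lambda_i(A)\leq \lambda_i(B)$ for all $i$ by Weyl's inequality, the function $A\mapsto \Tr(A^p)$ is monotone on the PSD cone for $p\geq 1$. Hence
$$\Tr\!\Big[\big(\E_\mu[e^{2g}]\big)^p\Big] \;\leq\; \Tr\!\Big[\big((\E_\mu e^g)^2 + \alpha\, \Dir(e^g)\big)^p\Big].$$

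Next, I would apply Lemma~\ref{lem: weighted convexity} to the right-hand side with $A=(\E_\mu e^g)^2$, $B=\alpha\,\Dir(e^g)$, and the crucial choice $\gamma = 1/(\alpha v_g) > 1$, which is legitimate by the hypothesis $\alpha v_g <1$. A short computation gives $\gamma/(\gamma-1) = 1/(1-\alpha v_g)$ and $\gamma^{p-1}\alpha^{p} = \alpha / v_g^{p-1}$, yielding
$$\Tr\!\Big[\big((\E_\mu e^g)^2 + \alpha\, \Dir(e^g)\big)^p\Big] \;\leq\; \frac{1}{(1-\alpha v_g)^{p-1}}\, \Tr\!\Big[(\E_\mu e^g)^{2p}\Big] + \frac{\alpha}{v_g^{p-1}}\, \Tr\!\Big[\big(\Dir(e^g)\big)^p\Big].$$
Finally, Theorem~\ref{th: dir-laplace} bounds the last trace by $v_g^{p}\,\Tr(\E_\mu[e^{2pg}])$, so the factor $v_g^{p-1}$ in the denominator cancels exactly and the coefficient in front of $\Tr(\E_\mu[e^{2pg}])$ collapses to the scalar $\alpha v_g$, which is the stated inequality.

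The substantive step is the choice $\gamma = 1/(\alpha v_g)$: any other choice would leave a $p$-dependent factor in front of $\Tr(\E_\mu[e^{2pg}])$, and the induction on $p$ used to prove Theorem~\ref{th: laplace transform} would no longer close. Beyond that, everything is a clean assembly of tools already developed earlier in the paper, with the monotonicity of $\Tr[(\cdot)^p]$ on the PSD cone playing the decisive role of reducing a non-iterable operator inequality to a scalar recursion.
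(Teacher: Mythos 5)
Your proof is correct and follows essentially the same route as the paper: apply the matrix Poincar\'e inequality to $e^g$, pass through the monotonicity of $\Tr[(\cdot)^p]$, invoke Lemma~\ref{lem: weighted convexity} and Theorem~\ref{th: dir-laplace}, and then choose $\gamma = (\alpha v_g)^{-1}$ to cancel the extraneous powers of $v_g$. The only cosmetic difference is that the paper keeps $\gamma$ unspecified until the last line, whereas you commit to the choice up front; the substance is identical.
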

\begin{proof}
Let $g: \Omega\to \sym$. 
Since $\mu$ satisfies a matrix Poincar\'e inequality, then  
$$
\Var_\mu(e^{g})\preceq \alpha \Dir(e^{g}),
$$
which amounts to 
$$
\E_\mu[e^{2g}]\preceq \big(\E_\mu[e^{g}]\big)^2 + \alpha\Dir(e^{g}).
$$

Using that $\Tr[(\cdot)^p]$ is operator monotone, the above inequality implies that 
$$
\Tr\Big[\big(\E_\mu[e^{2g}]\big)^p\Big]\leq \Tr\Big[\Big(\big(\E_\mu[e^{g}]\big)^2 +  \alpha\Dir(e^{g})\Big)^p\Big].
$$
Now using Lemma~\ref{lem: weighted convexity}, we get 
$$
\Tr\Big[\big(\E_\mu[e^{2g}]\big)^p\Big]\leq 
\Big(\frac{\gamma}{\gamma-1}\Big)^{p-1} \Tr\Big[ \big(\E_\mu[e^g]\big)^{2p}\Big] + \gamma^{p-1}
\alpha^p\Tr\Big[\Big(\Dir(e^{g})\Big)^p\Big], 
$$
where $\gamma>1$ will be chosen in the sequel. 
Using Theorem~\ref{th: dir-laplace}, we deduce that 
$$
\Tr\Big[\big(\E_\mu[e^{2g}]\big)^p\Big]\leq 
\Big(\frac{\gamma}{\gamma-1}\Big)^{p-1} \Tr\Big[ \big(\E_\mu[e^g]\big)^{2p}\Big] + \gamma^{p-1}
\alpha^p v_g^p\, \E_\mu \Tr(e^{2p\, g}). 
$$
It remains to choose $\gamma= (\alpha v_g)^{-1}$ to finish the proof. 
\end{proof}

\begin{proof}[Proof of Theorem~\ref{th: laplace transform}]
Without loss of generality, we assume that $\E_\mu[f]=0$. 
We will implement an induction procedure based on the previous lemma. 
We start by applying Lemma~\ref{lem: induction-laplace} with $g_1=\frac{\delta f}{2}$ and $p=1$ to get 
$$
\Tr\Big[\E_\mu[e^{\delta f}]\Big] \leq 
\frac{1}{1-\alpha v_f(\delta/2)^2} \Tr\Big[\Big(\E_\mu[e^{\frac{\delta f}{2}}]\Big)^2\Big],
$$
where we used that $v_{g_1}= (\delta/2)^2 v_f$. 
Now, apply again Lemma~\ref{lem: induction-laplace} with $g_2= \delta f/2^2$ and $p=2$ to get 
$$
\Tr\Big[\E_\mu[e^{\delta f}]\Big] \leq 
\frac{1}{\big(1-\alpha v_f(\delta/2)^2\big)\big(1-\alpha v_f(\delta/2^2)^2\big)}\Tr\Big[\Big(\E_\mu[e^{\frac{\delta f}{2^2}}]\Big)^{2^2}\Big] + \frac{\alpha v_f (\delta/2^2)^2}{\big(1-\alpha v_f(\delta/2)^2\big)} \Tr\Big[ \E_\mu[ e^{\delta f}]\Big],
$$
which after rearrangement leads to 
\begin{align*}
\Tr\Big[\E_\mu[e^{\delta f}]\Big]
&\leq \frac{1}{\big(1-\alpha v_f(\delta/2)^2-\alpha v_f(\delta/2^2)^2\big)\big(1-\alpha v_f(\delta/2^2)^2\big)}\Tr\Big[\Big(\E_\mu[e^{\frac{\delta f}{2^2}}]\Big)^{2^2}\Big]\\
&\leq 
\frac{1}{\big(1-\alpha v_f(\delta/2)^2-2\alpha v_f(\delta/2^2)^2\big)}\Tr\Big[\Big(\E_\mu[e^{\frac{\delta f}{2^2}}]\Big)^{2^2}\Big]\\
&\leq \frac{1}{\big(1-\alpha v_f\delta^2 (1-2^{-2})/2\big)}\Tr\Big[\Big(\E_\mu[e^{\frac{\delta f}{2^2}}]\Big)^{2^2}\Big].
\end{align*}
We will prove by induction on $s$ that 
$$
\Tr\Big[\E_\mu[e^{\delta f}]\Big] \leq 
\frac{1}{\big(1-\alpha v_f\delta^2(1-2^{-s})/2\big)}\Tr\Big[\Big(\E_\mu[e^{\frac{\delta f}{2^s}}]\Big)^{2^s}\Big].
$$
We verified the above inequality for $s=1$ and $s=2$. Suppose it is true for $s$, and apply Lemma~\ref{lem: induction-laplace} with 
$g= \delta f/2^{s+1}$ and $p=2^s$ to get 
$$
\Tr\Big[\Big(\E_\mu[e^{\frac{\delta f}{2^s}}]\Big)^{2^s}\Big] 
\leq \frac{1}{\big(1- \alpha v_f (\delta/2^{s+1})^2\big)^{2^s-1}}\Tr\Big[ \big(\E_\mu[e^{\delta f/2^{s+1}}]\big)^{2^{s+1}}\Big] 
+ \alpha v_f (\delta/2^{s+1})^2 \Tr\Big[ \E_\mu[ e^{\delta f}]\Big]. 
$$
Combining the above with the induction hypothesis, we get that
$$
\Tr\Big[\E_\mu[e^{\delta f}]\Big] \leq 
\frac{1}{\big(1-2^{-1}\alpha v_f\delta^2(1-2^{-s})- \alpha v_f (\delta/2^{s+1})^2\big)\big(1- \alpha v_f (\delta/2^{s+1})^2\big)^{2^s-1}} \Tr\Big[ \big(\E_\mu[e^{\delta f/2^{s+1}}]\big)^{2^{s+1}}\Big].
$$
Now using that $(1-x)^n\geq 1-nx$ when $x\leq 1$, we deduce from the above that 
$$
\Tr\Big[\E_\mu[e^{\delta f}]\Big] \leq 
\frac{1}{1-2^{-1}\alpha v_f\delta^2(1-2^{-s})- \alpha v_f (\delta/2^{s+1})^2-(2^s-1) \alpha v_f (\delta/2^{s+1})^2} \Tr\Big[ \big(\E_\mu[e^{\delta f/2^{s+1}}]\big)^{2^{s+1}}\Big],
$$
which after a short calculation finishes the induction. 
To finish the proof, take the limit as $s\to \infty$ and notice that 
$$
\Tr\Big[\Big(\E_\mu[e^{\frac{\delta f}{2^s}}]\Big)^{2^s}\Big]\underset{s\to\infty}{\longrightarrow}\Tr\Big[e^{\delta \E_\mu[f]}\Big]= d,
$$
to finish the proof. 
\end{proof}

\section{Matrix Poincar\'e inequality for product measures}\label{sec: product measures}
The aim of this section is to prove Theorems~\ref{th: Gaussian-Poincare} and \ref{thm:Gaussian-prod-Poin}. 
Before doing so, we will investigate general product measures. We will first show that an arbitrary product measure $\mu= \mu_1 \otimes \ldots \otimes \mu_n$  satisfies a suitable matrix Poincar\'e inequality, then  will compute the associated carr\'e du champ operator to deduce the following matrix concentration inequality.

\begin{theorem}\label{th: product measures}
Let $\mu= \mu_1 \otimes \ldots \otimes \mu_n$ be any product measure on some Polish space $\Omega^n$. Let    $f:\Omega^n \to \sym$ be such that 
$$v_f:= \displaystyle\sup_{(x_1,\ldots,x_n)\in \Omega^n} \big\Vert \sum_{i=1}^n \int \big(f(x_1, \ldots, x_n) - f(x_1, \ldots,x_{i-1},z,x_{i+1}, \ldots, x_n)\big)^2 d\mu_i (z)  \big\Vert$$ is finite. Then, for    any $t\geq 0$, we have 
$$
\mu\bigg( \lambda_{max}\big(f-\E_\mu f\big)\geq t\bigg)\leq 
d\exp\Big(-\frac{t^2}{ v_f+ t\sqrt{ v_f}}\Big).
$$
\end{theorem}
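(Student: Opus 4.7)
The plan is to reduce to Theorem~\ref{th: poincare to concentration} by constructing a reversible matrix Markov generator whose Poincar\'e constant is $\alpha=1$ and whose carr\'e du champ operator reproduces the quantity appearing in $v_f$. The natural choice is the coordinate-resampling (Glauber-type) generator
\[
\gen f(x)=\sum_{i=1}^n\Big(\int f(x_1,\ldots,x_{i-1},z,x_{i+1},\ldots,x_n)\,d\mu_i(z)-f(x)\Big),
\]
which is reversible with respect to $\mu$: each summand is reversible against $\mu_i$ for frozen $x_{-i}$, and the product structure then delivers reversibility against $\mu$. A direct application of Proposition~\ref{prop: dirichlet properties} gives
\[
\Gamma(f)(x)=\tfrac12\sum_{i=1}^n\int\big(f(x_1,\ldots,z,\ldots,x_n)-f(x)\big)^2\,d\mu_i(z),
\]
so that $\bigl\Vert\Vert\Gamma(f)\Vert\bigr\Vert_{L_\infty}=v_f/2$.

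The remaining task is the matrix Poincar\'e inequality $\Var_\mu(f)\preceq\Dir(f)$, which is exactly matrix Efron--Stein. I would prove it directly by a martingale decomposition. Writing $\E_i$ for integration over $(x_1,\ldots,x_i)$ against $\mu_1\otimes\cdots\otimes\mu_i$ (so that $\E_0 f=f$ and $\E_n f=\E_\mu f$), set $d_i=\E_{i-1}f-\E_i f$. These telescope to $f-\E_\mu f=\sum_{i=1}^n d_i$, and $d_i$ depends only on $(x_i,\ldots,x_n)$ with vanishing $\mu_i$-mean given $(x_{i+1},\ldots,x_n)$. Conditioning on the later coordinates in either multiplicative order shows that for $i<j$ both $\E_\mu[d_i d_j]$ and $\E_\mu[d_j d_i]$ vanish as matrices, so $\Var_\mu(f)=\sum_{i=1}^n\E_\mu[d_i^2]$.

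To control each summand I rewrite $d_i=\E_{i-1}\big[f-\E_{\mu_i}[f\mid x_{-i}]\big]$ (Fubini combined with the product structure) and invoke the operator Jensen inequality for the operator-convex map $t\mapsto t^2$, giving $d_i^2\preceq\E_{i-1}\big[(f-\E_{\mu_i}[f\mid x_{-i}])^2\big]$. Taking full expectations and noting that $\E_\mu\big[(f-\E_{\mu_i}[f\mid x_{-i}])^2\big]=\E_\mu\big[\Var_{\mu_i}(f\mid x_{-i})\big]$ is precisely the Dirichlet form $\Dir_i(f)$ of the $i$-th piece of $\gen$, summation over $i$ yields $\Var_\mu(f)\preceq\Dir(f)$, i.e.\ the matrix Poincar\'e inequality with constant $\alpha=1$. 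Plugging $\alpha=1$ and $\bigl\Vert\Vert\Gamma(f)\Vert\bigr\Vert_{L_\infty}=v_f/2$ into Theorem~\ref{th: poincare to concentration} gives $2\alpha\bigl\Vert\Vert\Gamma(f)\Vert\bigr\Vert_{L_\infty}=v_f$, reproducing the stated bound verbatim.

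The main obstacle is the tensorization step itself: the scalar Efron--Stein argument must be transported to the matrix world, which requires justifying two noncommutative facts that are not automatic, namely (a) the vanishing of the cross-terms $\E_\mu[d_i d_j]$ and $\E_\mu[d_j d_i]$ as matrices, which forces us to use the conditional mean-zero property in both multiplicative orders, and (b) the Jensen-type inequality $(\E_{i-1}Y)^2\preceq\E_{i-1}[Y^2]$ for Hermitian-valued $Y$ under conditional expectation, which relies on operator convexity of $t\mapsto t^2$. Once these two ingredients are in place, the rest of the argument is essentially a bookkeeping exercise matching the output of Theorem~\ref{th: poincare to concentration} with the form of $v_f$ in the statement.
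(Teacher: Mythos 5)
Your proposal is correct and reaches the same matrix Poincar\'e inequality $\Var_\mu(f)\preceq\Dir(f)$ with the same generator and carr\'e du champ, but the route to the Poincar\'e inequality genuinely differs from the paper's. The paper constructs the Poisson resampling process explicitly, verifies ergodicity, and uses Lemma~\ref{lem:var-Dir} to write $\Var_\mu(f)=2\int_0^\infty\Dir(P_tf)\,dt$; the inequality then follows from the exponential decay estimate $\Dir(P_tf)\preceq e^{-2t}\Dir(f)$, which is proved by computing $\delta_iP_tf$ from the explicit semigroup formula and applying operator convexity. You instead give the static martingale (Efron--Stein) proof: decompose $f-\E_\mu f=\sum_i d_i$ with $d_i=\E_{i-1}f-\E_if$, note that the matrix cross-terms $\E_\mu[d_id_j]$ and $\E_\mu[d_jd_i]$ vanish because the conditional mean-zero property of $d_i$ can be applied on either side of $d_j$, and bound each $\E_\mu[d_i^2]$ by conditional Jensen for the operator-convex square. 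This is precisely the ``direct proof of matrix Efron--Stein'' that the paper cites in Remark~\ref{rk:EfronStein} (attributing it to Cheng--Hsieh) but does not reproduce. The semigroup route buys a unified template that the authors re-use for the Gaussian and SCP cases (where decay of $\Dir(P_tf)$ is the operative mechanism and no martingale structure is available), whereas your martingale argument is more elementary and self-contained for the product case specifically, at the cost of being tied to the tensorization structure. Your accounting of constants ($\alpha=1$, $\|\Gamma(f)\|_{L_\infty}=v_f/2$, giving $2\alpha\cdot v_f/2=v_f$ in Theorem~\ref{th: poincare to concentration}) is correct and matches the stated bound.
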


The proof of Theorem \ref{th: product measures} simply consists of constructing a  Markov process  with  $\mu$ as stationary measure and having a suitable Markov generator $\gen$ for which we prove a matrix Poincar\'e inequality. 
In this case, the matrix carr\'e du champ operator consists  of the sum of the squared variation in each coordinate of the matrix function.  One of the simplest and most natural smoothness assumptions on a matrix function $f$ is the  so-called bounded difference condition;  i.e.  for any $i=1, \ldots , n$ there exists a deterministic matrix $A_i \in \sym$ such that
\[
\big(f(x_1 ,  \ldots ,x_n) - f (x_1 ,  \ldots, x_{i-1}, x', x_{i+1}, \ldots ,x_n) \big)^2 \preceq A_i^2
\]
for any $x', x_1 ,  \ldots ,x_n \in \Omega$. In this case,  we instantly get that $ v_f \leq \|\sum_{i=1}^n A_i^2\|$ and hence the inequality 
\begin{equation}\label{ex:concent-PM}
\mu\Big( \lambda_{max}\big(f-\E_\mu f\big)\geq t\Big)\leq 
d\exp\bigg(-\frac{t^2}{ \sigma^2+  t \sigma }\bigg),
\end{equation}
where $\sigma^2:= \| \sum_{i=1}^n A_i^2\|$. This is a weak form of the matrix bounded difference inequality, as Poincar\'e inequality cannot capture sub-Gaussian concentration. The matrix bounded difference inequality with sub-Gaussian tail bounds has been established as a consequence of matrix Azuma inequality \cite[Section~7]{Tropp-con}, and later recovered with improved constant factors as a consequence of a matrix exponential Efron-Stein inequality  \cite[Section~5]{PMT-EffronStein} (see also \cite{PMT} where the inequality is derived with an optimal exponent). While the approach based on the matrix Poincar\'e inequality is unable to compete with such refined results, it provides a unifying framework  for several exponential concentration inequalities allowing to cover a wide range of examples and deriving a variety of concentration inequalities. While Theorem~\ref{th: product measures} is stated for any product measure, it could not be used 
for the example $f(x_1,\ldots,x_n)=\sum_{i=1}^nx_i A_i$ with the standard Gaussian measure as the bounded difference condition is violated in this case.
As a remedy, Theorem~\ref{th: Gaussian-Poincare} provides a refined matrix Poincar\'e inequality yielding the concentration given in Theorem~\ref{thm:Gaussian-prod-Poin}, thus recovering the same bound as in \eqref{ex:concent-PM} for matrix Gaussian series.

Theorem~\ref{th: product measures} follows by combining Theorem~\ref{th: poincare to concentration} with Theorem~\ref{th:Poincare-PM} below, and using  Proposition~\ref{prop:Markov-process-PM} which provides the expression of the matrix carr\'e du champs operator. We start by  introducing a  Markov process $X_t= (X_t^1, \ldots , X_t^n)_{t\in \mathbb{R}_+}$ having $\mu$ as stationary measure and  through which we  obtain a matrix Poincar\'e inequality with respect to a suitable Dirichlet form. Such a construction  is  known, see for instance \cite[Chapter 2]{vanHandel-course}. 

For each coordinate $i=1, \ldots, n $, we associate  an independent Poisson process $N^i=(N_t^i)_{t \in \mathbb{R}_+}$ with  rate $1$ and construct $X_t$ as follows: we draw $X_0$ according to $\mu$ independently of the Poisson process. Then, whenever $N_t^i$ jumps for some $i$, we replace the value of $X_t^i$ by an independent sample from $\mu_i$ while keeping the remaining coordinates fixed.

\begin{proposition}\label{prop:Markov-process-PM}
Let $\mu= \mu_1 \otimes \ldots \otimes \mu_n$ be any product measure on some Polish space $\Omega^n$. The process $(X_t)_{t\in \mathbb{R}_+}$ defined above is a reversible  Markov process with $\mu$ as stationary measure and  semi-group given by 
\[
P_tf(x) = \sum_{I\subseteq \{1, \ldots , n \}} (1-e^{-t})^{|I|} e^{-t(n-|I|)} \int f(x_1, \ldots ,x_n) \prod_{i\in I} d\mu_i(x_i),
\]
for any $x=(x_1, \ldots , x_n)\in \Omega^n$ and any  $f:\Omega^n \to \sym$ whose matrix coefficients belong to $\LL$. 
Moreover, the carr\'e du champ and Dirichlet form are respectively given by
\[
\Gamma (f)(x) = \frac{1}{2} \sum_{i=1}^n \int \big(f(x_1, \ldots, x_n) - f(x_1, \ldots,x_{i-1},z,x_{i+1}, \ldots, x_n)\big)^2 d\mu_i (z) 
\]
and
\[
\Dir (f)= \sum_{i=1}^n \int \bigg(  f(x_1, \ldots, x_n) -\int  f(x_1, \ldots,x_{i-1},z,x_{i+1}, \ldots, x_n)d\mu_i (z)\bigg)^2  d\mu(x).
\]
\end{proposition}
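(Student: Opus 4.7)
The plan is to verify the four assertions of the proposition (Markov/stationary/reversible, semigroup formula, carr\'e du champ, Dirichlet form) in turn by exploiting the explicit Poissonian construction of $(X_t)_{t\ge 0}$ and reducing everything to coordinate-wise computations.

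\emph{Semigroup and reversibility.} Independence of the processes $N^1,\ldots,N^n$ reduces the analysis to one coordinate at a time. For each $i$, the event ``$N^i$ jumps at least once in $[0,t]$'' has probability $1-e^{-t}$, and conditionally on this event $X_t^i$ is distributed as $\mu_i$ (being the last resampled value). Enumerating the subsets $I$ of coordinates that have been refreshed, with joint probability $(1-e^{-t})^{|I|} e^{-t(n-|I|)}$, yields directly the stated formula for $P_t f(x)$. The Markov property is inherited from the Poisson processes, and $\mu$ is clearly stationary because each resampling draws from the marginal $\mu_i$. Reversibility can then be read off from the semigroup formula (each conditional resampling operator $f\mapsto \int f\, d\mu_i$ is self-adjoint in $L^2(\mu_i)$, hence so is $P_t$ by tensorization), or equivalently from the detailed-balance structure of the generator derived next.

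\emph{Generator, carr\'e du champ and Dirichlet form.} Differentiating $P_tf$ at $t=0$, contributions with $|I|\ge 2$ are $O(t^2)$, leaving
\[
\gen f(x) \;=\; \sum_{i=1}^n \int \bigl(f(x^{(i,z)}) - f(x)\bigr)\, d\mu_i(z),\qquad x^{(i,z)}:=(x_1,\ldots,x_{i-1},z,x_{i+1},\ldots,x_n).
\]
Inserting $\gen$ into $\Gamma(f)=\tfrac12(\gen(f^2)-f\gen f-\gen f\cdot f)$, a direct term-by-term expansion shows that the bilinear cross-terms $f(x)f(x^{(i,z)})$ and $f(x^{(i,z)})f(x)$, taken with the correct signs, rebuild the full Hermitian square $(f(x)-f(x^{(i,z)}))^2$, giving the claimed formula for $\Gamma(f)$. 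For the Dirichlet form I use $\Dir(f)=\E_\mu[\Gamma(f)]$ from Proposition~\ref{prop: dirichlet properties}. Setting $\bar f_i(x):=\int f(x^{(i,z)})d\mu_i(z)$ and applying Fubini on $\mu=\mu_i\otimes\mu_{-i}$, the identity $\int f\, d\mu_i=\bar f_i$ turns both $\tfrac12\E_\mu\!\int(f(x)-f(x^{(i,z)}))^2 d\mu_i(z)$ and $\int(f-\bar f_i)^2 d\mu$ into the common quantity $\E_\mu[f^2]-\E_{\mu_{-i}}[\bar f_i^2]$, which gives the stated Dirichlet form after summing over $i$.

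\emph{Anticipated obstacle.} The computations are conceptually the same as in the scalar setting, so the only genuine subtlety is matrix bookkeeping: since $f(x)$ and $f(x^{(i,z)})$ do not commute, the order of factors must be tracked scrupulously throughout the $\Gamma$ and $\Dir$ computations. The cancellations nevertheless go through because every relevant product is eventually symmetrized into the Hermitian square $(f(x)-f(x^{(i,z)}))^2$ — no operator-convexity or trace input is required at this stage, which is why one recovers identities of the same shape as in the commutative case.
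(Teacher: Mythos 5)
Your proposal is correct and matches the paper's proof in all its essential steps; the one point of genuine divergence is how you obtain the carr\'e du champ. You first extract the generator explicitly,
\[
\gen f(x) = \sum_{i=1}^n \int \bigl(f(x^{(i,z)})-f(x)\bigr)\,d\mu_i(z),
\]
and then substitute into the algebraic definition $\Gamma(f)=\tfrac12\bigl(\gen(f^2)-f\gen f-\gen f\,f\bigr)$, whereas the paper instead invokes the limit representation of Proposition~\ref{prop: dirichlet properties}, $\Gamma(f)(x)=\lim_{t\to 0}\tfrac{1}{2t}\,P_t h_x(x)$ with $h_x(y)=(f(x)-f(y))^2$, and then expands the explicit semigroup formula, using that only $|I|=1$ contributes at order $t$ and that $h_x(x)=0$. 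The two routes are essentially equivalent (Proposition~\ref{prop: dirichlet properties} is what links them), but yours has the mild advantage of making $\gen$ explicit and showing directly how the two cross-terms symmetrize into the Hermitian square, which is a useful thing to see spelled out in the noncommutative setting. Your reversibility argument via tensorization of the self-adjoint conditional-expectation operators, and your Fubini argument identifying both sides with $\E_\mu[f^2]-\E_{\mu_{-i}}[\bar f_i^2]$, are both correct and are variants of what the paper does. No gaps.
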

\begin{proof}
It is easy to verify that $X_t$ is a Markov process with $\mu$ as stationary measure and that $X_t$ is reversible with respect to $\mu$. Let $f:\Omega^n \to \sym$ and $x=(x_1, \ldots , x_n)\in \Omega^n$. By  construction, the Markov semi-group is computed explicitly 
\begin{align*}
P_tf(x) 
&=  \sum_{ I\subseteq \{1, \ldots , n \}} \Prob \big[N^i_t>0, \text{ for } i \in I, N^i_t=0 \text{ for } i \notin I \big] \int f(x_1, \ldots ,x_n) \prod_{i\in I} d\mu_i(x_i) 
\\& = \sum_{ I\subseteq \{1, \ldots , n \}} (1-e^{-t})^{|I|} e^{-t(n-|I|)} \int f(x_1, \ldots ,x_n) \prod_{i\in I} d\mu_i(x_i) .
\end{align*}
Moreover as $\lim_{t\rightarrow \infty}(1-e^{-t})^{|I|} e^{-t(n-|I|)}=0$ whenever $|I|<n$,   one can readily see that the process is ergodic. 
In light of Proposition \ref{prop: dirichlet properties}, the carr\'e du champ operator is  given by
\begin{align*}
\Gamma (f)(x)&= \lim_{t\rightarrow 0} \frac{\E\big[\big(f(X_t)-f(X_0)\big)^2\mid X_0=x\big]}{2t} \\
&=\lim_{t\rightarrow 0} \frac{\E\big[\big(f(X_t)-f(x)\big)^2\mid X_0=x\big]}{2t} =\lim_{t\rightarrow 0} \frac{P_t h_{x}(x)}{2t}
\end{align*}
where $h_{x}: \Omega \to \sym$ is the function defined by $h_{x} (y)= (f(x)-f(y))^2$. Now noting that $\lim_{t \rightarrow 0} t^{-1} (1-e^{-t})^{|I|} e^{-t(n-|I|)}=0$ whenever $|I| \geq 2$ and that $h_x(x)=0$,  the  explicit expression of the Markov semigroup then yields that 
\begin{align*}
\lim_{t \rightarrow 0}  \frac{P_t h_x(x)}{t} &= 
\sum_{i=1}^n  \int \big(f(x_1, \ldots, x_n) - f(x_1, \ldots,x_{i-1},z,x_{i+1}, \ldots, x_n)\big)^2 d\mu_i (z).    
\end{align*}
Finally, recalling  that $\Dir (f)= \E_\mu[\Gamma(f)]$ and using that 
\begin{align*}
\int \big(f(x) - &f(x_1, \ldots,x_{i-1},z,x_{i+1}, \ldots, x_n)\big)^2 d\mu_i (z)d\mu_i(x_i)\\
&= 2\int \bigg(  f(x) -\int  f(x_1, \ldots,x_{i-1},z,x_{i+1}, \ldots, x_n)d\mu_i (z)\bigg)^2  d\mu_i(x_i),
\end{align*}
we get the expression of $\Dir(f)$. 
\end{proof}

We are now ready  to prove that  $\mu$ satisfies a matrix Poincar\'e inequality with constant $1$ with respect to the above Dirichlet form. 
\begin{theorem}\label{th:Poincare-PM}
Let $\mu= \mu_1 \otimes \ldots \otimes \mu_n$ be any product measure on some Polish space $\Omega^n$. Then for any   $f:\Omega^n \to \sym$  whose matrix coefficients belong to $\LL$, 
\[
\Var_\mu (f) \preceq \Dir(f).
\]
\end{theorem}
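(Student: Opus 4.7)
The plan is to reduce the statement to the matrix tensorization of variance
\[
\Var_\mu(f) \preceq \sum_{i=1}^n \E_\mu\!\left[\Var_{\mu_i}(f\mid x_{-i})\right],
\]
where $\Var_{\mu_i}(f\mid x_{-i})$ is the variance of $f$ in the $i$-th variable with the others held fixed. Once this is established, the right-hand side coincides with $\Dir(f)$. Indeed, setting $\phi_i(x_{-i}) := \int f(x_1,\ldots,z,\ldots,x_n)\, d\mu_i(z)$, the expression of $\Dir(f)$ in Proposition~\ref{prop:Markov-process-PM} reads $\sum_i \E_\mu[(f-\phi_i)^2]$, and for each $i$ an application of Fubini gives $\E_\mu[(f-\phi_i)^2] = \E_\mu[\Var_{\mu_i}(f\mid x_{-i})]$.

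I would prove the tensorization inequality by induction on $n$. The case $n=1$ is an equality. For the inductive step, I would use the total variance decomposition with respect to the first coordinate. Setting $g(x_1) := \E_{\mu_{2:n}}[f\mid x_1]$ and writing $\E_\mu[f^2] = \E_{\mu_1}\E_{\mu_{2:n}}[f^2\mid x_1]$, a direct expansion (which only relies on linearity and is insensitive to noncommutativity) yields
\[
\Var_\mu(f) \;=\; \E_{\mu_1}\!\left[\Var_{\mu_{2:n}}(f\mid x_1)\right] \;+\; \Var_{\mu_1}(g).
\]
Applying the induction hypothesis to the first term (pointwise in $x_1$, viewing $f(x_1,\cdot)$ as a function on $\Omega^{n-1}$) and then integrating over $x_1$ bounds it by $\sum_{i=2}^n \E_\mu[\Var_{\mu_i}(f\mid x_{-i})]$.

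It remains to show $\Var_{\mu_1}(g)\preceq \E_\mu[\Var_{\mu_1}(f\mid x_{-1})]$, which recovers the missing $i=1$ term. For this I would use the ``two independent copies'' representation: if $Y,Y'$ are iid with distribution $\mu_1$ and $Z\sim \mu_{2:n}$ is independent of both, then $\Var_{\mu_1}(g) = \tfrac{1}{2}\E\bigl[(g(Y)-g(Y'))^2\bigr]$ and $g(Y)-g(Y') = \E_Z[f(Y,Z)-f(Y',Z)]$. Applying the operator convexity of $t\mapsto t^2$ (equivalently, $(\E X)^2 \preceq \E[X^2]$ for any Hermitian-valued random matrix $X$, seen by expanding $\E[(X-\E X)^2]\succeq 0$) conditionally on $(Y,Y')$ gives
\[
(g(Y)-g(Y'))^2 \;\preceq\; \E_Z\!\left[(f(Y,Z)-f(Y',Z))^2\right].
\]
Averaging over $Y,Y',Z$ produces $\tfrac{1}{2}\E[(f(Y,Z)-f(Y',Z))^2] = \E_\mu[\Var_{\mu_1}(f\mid x_{-1})]$, as desired, closing the induction.

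The only step where matrix subtlety enters is the bound on $\Var_{\mu_1}(g)$: one must avoid scalar tricks that silently commute $f$ with its conditional mean. The substitute is precisely the operator Jensen inequality for the square function, which is automatic because $t\mapsto t^2$ is operator convex. Everything else (total variance decomposition, Fubini, matching with the explicit form of $\Dir$ from Proposition~\ref{prop:Markov-process-PM}) is a formal rearrangement that carries over verbatim from the scalar Efron-Stein proof.
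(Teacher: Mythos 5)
Your proof is correct, but it takes a genuinely different route from the paper's. The paper proves the inequality via the semigroup interpolation identity $\Var_\mu(f) = 2\int_0^\infty \Dir(P_t f)\,dt$ (Lemma~\ref{lem:var-Dir}), computes $\delta_i P_t f = e^{-t}\cdot(\text{an average of }\delta_i f)$ explicitly from the Poisson-clock semigroup of Proposition~\ref{prop:Markov-process-PM}, applies operator Jensen to get $\Dir(P_t f) \preceq e^{-2t}\Dir(f)$, and integrates. You instead prove the matrix Efron--Stein (tensorization) inequality directly by induction on $n$, using the total-variance decomposition for the first coordinate plus the ``two independent copies'' representation of a variance and operator convexity of the square to bound $\Var_{\mu_1}(g)$. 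Both arguments invoke the same operator Jensen inequality at the crucial step, and your reduction of the right side to $\Dir(f)$ via Proposition~\ref{prop:Markov-process-PM} is exactly the identification the paper itself makes in Remark~\ref{rk:EfronStein}. What the paper's route buys is uniformity with the rest of Section~\ref{sec: product measures}: the same semigroup identity is reused (with the Ornstein--Uhlenbeck process and the contraction $(P_t f)' = e^{-t}P_t f'$) to get the sharper Gaussian Poincar\'e inequality of Theorem~\ref{th: Gaussian-Poincare}. Your route is more elementary and self-contained---it does not need ergodicity, Lemma~\ref{lem:var-Dir}, or any explicit semigroup formula for the proof of the inequality itself---and it matches the ``direct proof'' the authors allude to by citing Cheng--Hsieh in Remark~\ref{rk:EfronStein}.
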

\begin{proof}
Let $f: \Omega^n \to \sym$. Define  $\delta_i f$ by
\[
\delta_i f(x) := f(x) - \int f(x_1, \ldots , x_{i-1}, z ,x_{i+1} , \ldots , x_n) d\mu_i(z)
\] 
and  note that 
\[
\Dir (f)=\sum_{i=1}^n \int  \big(\delta_i f(x)\big)^2 d\mu(x).
\]
Since $(X_t)_{t\in \R_+}$ is ergodic and reversible, we apply Lemma  \ref{lem:var-Dir} to write
\[
\Var_\mu (f)= 2 \int_0^\infty \Dir (P_t f) \, dt =  2  \sum_{i=1}^n  \int_0^\infty \int ( \delta_i P_t f (x))^2 d\mu(x) \, dt. 
\]
Computing  $\delta_i P_tf(x)$ explicitly, we get
\[
\delta_i P_t f (x)  = e^{-t} \sum_{\substack{I\subseteq \{1, \ldots , n \} \\ i \notin I}} (1-e^{-t})^{|I|} e^{-t(n-1-|I|)} \int \delta_i f(x_1, \ldots ,x_n) \prod_{i\in I} d\mu_i(x_i)\, . 
\]
Since $ \sum_{I \not\ni i} (1-e^{-t})^{|I|} e^{-t(n-1-|I|)} =1$ and the square is operator convex, then by convexity and Jensen's inequality we obtain
\begin{align*}
(\delta_i P_t f (x)  )^2 &\preceq e^{-2t} \sum_{\substack{I\subseteq \{1, \ldots , n \} \\ i \notin I}} (1-e^{-t})^{|I|} e^{-t(n-1-|I|)} \Bigg( \int \delta_i f(x_1, \ldots ,x_n) \prod_{i\in I} d\mu_i(x_i)\Bigg)^2\,
\\&
\preceq e^{-2t} \sum_{\substack{I\subseteq \{1, \ldots , n \} \\ i \notin I}} (1-e^{-t})^{|I|} e^{-t(n-1-|I|)}  \int \Big( \delta_i f(x_1, \ldots ,x_n) \Big)^2\prod_{i\in I} d\mu_i(x_i)\,.
\end{align*}
Taking the expectation we get,
\[
\int ( \delta_i P_t f (x))^2 d\mu(x)\preceq e^{-2t} \int ( \delta_i  f (x))^2 d\mu(x)
\]
and hence 
\[
\Var_\mu (f) \preceq  \Bigg(2 \int_0^\infty e^{-2t}  \, dt\Bigg) \sum_{i=1}^n \int ( \delta_i  f (x) )^2 d\mu (x)  = \Dir(f). 
\]
\end{proof}

\begin{remark}\label{rk:EfronStein}
In view of the expression of the Dirichlet form,  the above matrix Poincar\'e inequality implies  the subadditivity of the variance 
\[
\Var_\mu (f) \preceq \Dir(f) =  \sum_{i=1}^n \int \Var_{\mu_i}(f) d\mu,
\]
 and hence the matrix Efron-Stein inequality for product measures. This shows that the latter is a particular case of matrix Poincar\'e inequalities. We refer to \cite[Theorem 5.1]{Ch-Hs} for a direct proof of the Matrix Efron-Stein inequality. 
\end{remark}

\subsection*{Matrix Poincar\'e for the standard Gaussian measure} 
The matrix Poincar\'e inequality established above applies for any product measure. However, when  given a specific product measure, it is possible to architect a suitable Markov generator and prove other matrix Poincar\'e inequalities which could result in better concentration inequalities. 
In the  remaining part of this section, we investigate the case of the $n$-dimensional standard Gaussian measure and prove Theorem~\ref{th: Gaussian-Poincare}.  With this in hand, Theorem~\ref{thm:Gaussian-prod-Poin} will then follow by using Theorem~\ref{th: poincare to concentration} together with the expession of the matrix carr\'e du champ operator given in Proposition~\ref{prop:GaussianDir-CDCh} below.    

As we have seen in Remark \ref{rk:EfronStein}, the matrix Poincar\'e inequality we established can be interpreted as a   matrix Efron Stein inequality. In view of this, it is enough to investigate the matrix Poincar\'e inequality for the one dimensional standard Gaussian measure and then extend it by tensorization to the $n$-dimensional case. To this aim, let us consider the Ornstein-Uhlenbeck semi-group acting on matrix valued functions in the obvious way, by considering the action entrywise. More precisely, the Ornstein-Uhlenbeck semi-group is  defined by
\[
P_t f(x)= \E \big[ f \big( e^{-t}x + \sqrt{1- e^{-2t}}\xi\big)\big], \qquad \xi\sim N(0,1),
\]
for any $f:\R\to \sym$. The Ornstein-Uhlenbeck process is a reversible ergodic Markov process with stationary measure the standard Gaussian measure. Moreover, the associated Markov generator is given by \[\gen (f)(x)= -x f'(x) + f''(x), 
\]
where $f'(x)$ (resp.~$f''(x)$) denotes the matrix whose entries are the derivatives (resp.~ second derivatives) of the corresponding entries of $f(x)$.

\begin{proposition}\label{prop:GaussianDir-CDCh}
The matrix Dirichlet form and matrix carr\'e du champ operator associated with the Ornstein-Uhlenbeck process and standard Gaussian measure $\gamma$ are given by
\[
\Gamma (f) (x) = (f'(x))^2
\quad \text{and} \quad 
\Dir (f) = \E_\gamma[(f')^2], 
\]
for any smooth function $f:\R\to \sym$ whose matrix coefficients and their derivatives belong to $\LL$. 
\end{proposition}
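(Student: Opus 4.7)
The plan is to derive both expressions directly from the definitions, using the known formula $\gen(f)(x) = -xf'(x) + f''(x)$ for the matrix Ornstein--Uhlenbeck generator and the definition $\Gamma(f) = \tfrac12\bigl(\gen(f^2) - f\gen(f) - \gen(f)f\bigr)$. Once $\Gamma(f)$ is identified, the expression $\Dir(f) = \E_\gamma[\Gamma(f)]$ from Proposition~\ref{prop: dirichlet properties} immediately gives the Dirichlet form, since the reversibility and ergodicity of the Ornstein--Uhlenbeck process with respect to $\gamma$ are classical facts (and carry over entrywise to matrix-valued functions).

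The main calculation is therefore the computation of $\gen(f^2)$. By the Leibniz rule applied entrywise, for a smooth matrix-valued $f$ one has
\[
(f^2)'(x) = f'(x)f(x) + f(x)f'(x),
\]
and differentiating once more,
\[
(f^2)''(x) = f''(x)f(x) + 2\bigl(f'(x)\bigr)^2 + f(x)f''(x).
\]
Plugging into $\gen(f^2)(x) = -x(f^2)'(x) + (f^2)''(x)$ gives
\[
\gen(f^2)(x) = -x\bigl(f'(x)f(x) + f(x)f'(x)\bigr) + f''(x)f(x) + 2\bigl(f'(x)\bigr)^2 + f(x)f''(x).
\]
On the other hand, using $\gen(f)(x) = -xf'(x) + f''(x)$ one computes
\[
f(x)\gen(f)(x) + \gen(f)(x)f(x) = -x\bigl(f(x)f'(x) + f'(x)f(x)\bigr) + f(x)f''(x) + f''(x)f(x).
\]
Subtracting, every term cancels except $2(f'(x))^2$, yielding $\Gamma(f)(x) = (f'(x))^2$.

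The step that requires a minimum of care is ensuring that the product and chain rules are applied in the correct noncommutative order when differentiating $f^2$: one cannot combine $f'f$ with $ff'$. Once that is respected, the cancellation is automatic and no further obstacle arises. Finally, the Dirichlet form is obtained by integration: $\Dir(f) = \E_\gamma[\Gamma(f)] = \E_\gamma\bigl[(f')^2\bigr]$, where the integrability assumptions on the entries of $f$ and $f'$ justify the use of Proposition~\ref{prop: dirichlet properties}(2).
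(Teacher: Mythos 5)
Your proof is correct and follows exactly the same approach as the paper: expand $\gen(f^2)$ via the noncommutative Leibniz rule, subtract $f\gen(f)+\gen(f)f$, observe the cancellation leaving $2(f')^2$, and then invoke Proposition~\ref{prop: dirichlet properties}(2) for the Dirichlet form. You have merely written out explicitly the same computation that the paper carries out in a single compressed display.
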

\begin{proof}
To compute the matrix carr\'e du champ operator, we start writing  
\begin{align*}
\Gamma (f)(x)&= \frac12\big( \gen(f^2)(x) -f(x)\gen(f)(x) -\gen(f)(x)f(x)\big)
\\&= \frac12\Big(-x(f^2)'(x) +  ( f^2)''(x) -f(x)\big(-x f'(x) +  f''(x) \big)-\big(-x f'(x) +  f''(x) \big)f(x)\Big)
\\&= (f'(x))^2. 
\end{align*}
Finally, we finish the proof by recalling that the Dirichlet form is the expectation of the carr\'e du champ operator.
\end{proof}

We are ready now to prove the Gaussian  matrix Poincar\'e inequality. 
\begin{proof}[Proof of Theorem~\ref{th: Gaussian-Poincare}]
We first prove the one dimensional version of the theorem. 
Recalling the expression of the semi-group, we note that $(P_t f)'(x) = e^{-t}P_t f' (x)$. Using this together with Property~\eqref{en:square-MSG} of Proposition~\ref{lem: matrix generator prop}, we have  
\begin{align*}
\Dir (P_t f) &= \E_\gamma \big[ \big((P_t f)'\big)^2]
= e^{-2t} \E_\gamma\big[ (P_t  f' )^2\big]
\\&\preceq e^{-2t} \E_\gamma\big[ P_t ( f' )^2\big] = e^{-2t} \E_\gamma\big[  (f' )^2\big] = e^{-2t} \Dir (f). 
\end{align*}
Integrating over $\R_+$ and using Lemma \ref{lem:var-Dir}, we deduce the desired matrix Poincar\'e inequality in the one dimensional case. 
To derive the inequality for the $n$-dimenstional standard Gaussian measure $\mu=\mu_1\otimes\ldots\otimes \mu_n$, we use Remark~\ref{rk:EfronStein} to write 
$$
\Var_\mu (f) \preceq \int \sum_{i=1}^n  \Var_{\mu_i}(f) d\mu,
$$
then use the established one dimensional matrix Poincar\'e to get 
$$
\Var_\mu (f) \preceq  \int \sum_{i=1}^n   \Big(\frac{\partial}{\partial x_i}f(x_1,\ldots,x_n)\Big)^2   d\mu,
$$
and finish the proof.
\end{proof}

\section{Matrix Poincar\'e inequality for SCP measures}\label{sec: strong rayleigh}

The goal of this section is to prove Theorem~\ref{th: strong rayleigh} from which the concentration inequality in Theorem~\ref{th: SR-conc} follows. In the sequel, $\mu$ denotes a probability measure on $\Omega:=\{x\in \{0,1\}^n:\, \sum_{i=1}^n x_i=k\}$ with the SCP property and $\xi$ a random vector on $\Omega$ distributed according to $\mu$. We will start by introducing the (normalized) Markov generator for which $\mu$ satisfies a Poincar\'e inequality with constant $2k$. To this aim, given $x, y\in \Omega$, we denote $x\sim y$ if $x$ and $y$ coincide on all but exactly $2$ coordinates. 

Given $x\sim y$, we denote by $s_{xy}$ (resp.~$s_{yx}$) 
the unique coordinate $i\in \{1,\ldots, n\}$ such that $x_i=0$ and $y_i=1$ (resp.~$x_i=1$ and $y_i=0$). Note that for any two vectors $x,y$ in $\Omega$, it is possible to construct a sequence of intermediate vectors $(z_i)_{0\leq i\leq \ell}$ such that $z_0=x$, $z_\ell=y$ and $z_{i}\sim z_{i+1}$ for any $i=0,\ldots, \ell-1$. Indeed, the intermediate sequence can be derived by swapping zeros and ones (step by step) on the coordinates where $x$ and $y$ differ. This motivates us to build the generator on vectors differing exactly by one such swap. 

Before providing the explicit expression of the generator, let us describe briefly the intuition behind it. 
Given $x\sim y$, to transition from $x$ to $y$, a swap has to be made between the coordinates $s_{xy}$ and $s_{yx}$, and the transition probability is governed by $\mu$. We will uncover  the coordinates of $x$ and $y$ in a random order until reaching the coordinate where the two differ (which could be $s_{xy}$ or $s_{yx}$), in which case we exhibit a ``swapping" probability of this coordinate. The uncovered coordinates will be indexed by an ordered subset $S=(s_1,\ldots,s_\ell)\subset [n]$. We will say that $(S,x,y)$ is \textit{admissible} if $x\sim y$ and $x_S=y_S$, that is, the restriction of $x$ and $y$ to the coordinates in the ordered set $S$ coincide. Note that this automatically implies that $S$ does not contain $s_{xy}$ and $s_{yx}$. 
Now given an admissible triple $(S,x,y)$ and $s\not \in S$, since $\mu$ satisfies the SCP property,  then there exists a coupling $\kappa_{S}^{s}$ of the measures $\mu(\cdot\mid \xi_S=x_S, \xi_s=0)$ and $\mu(\cdot\mid \xi_S=x_S, \xi_s=1)$ which is supported on $\{(x_{\bar{S}}, y_{\bar{S}})\in \{0,1\}^{\bar{S}}\times \{0,1\}^{\bar{S}}:\, x_{\bar{S}}\succeq y_{\bar{S}} \text{ and } \Vert x_{\bar{S}}\Vert_1=k-\Vert x_S\Vert_1\}$, where we denoted by $\bar{S}$ the unordered set $\bar{S}=(S\cup \{s\})^c$. 

We are now ready to introduce the Markov generator $Q$ defined for every $x\sim y$ by 
\begin{equation}\label{eq: def-generator-scp}
Q(x,y):= \frac{1}{2k}\sum_{\ell=0}^{n-2} 
\frac{(n-1-\ell)!}{n!} \sum_{\underset{(S,x,y) \text{ admissible}}{S:\, \vert S\vert=\ell}} 
\frac{H_S^{s_{xy}}(x,y)+H_S^{s_{yx}}(y,x)}{\mu(x\mid \xi_S=x_S)},
\end{equation}
where $$H_S^s(x,y):= \kappa_S^{s}(x,y) \Prob(\xi_{s}=0\mid \xi_S=x_S)\Prob(\xi_{s}=1\mid \xi_S=x_S).$$
We set $Q(x,x)=-\sum_{y\sim x} Q(x,y)$ to complete the construction. 
The above expression puts in place the informal description provided previously. Indeed, it is obtained by averaging over all possible ways of uncovering the coordinates of $x$ and $y$. This can be seen by noting that the factor $\frac{(n-1-\ell)!}{n!}$ represents the probability of  uncovering the coordinates in some fixed order $(s_1,\ldots,s_\ell, s_{xy})$. Finally, after uncovering the coordinates, we exhibit the transition probability on the differing coordinate 
using the corresponding coupling between the measures obtained by conditioning on the uncovered coordinates. We should note that the above Markov generator is the one implicitly used in \cite{H-Salez}. Indeed the above expression can be recovered by carefully following the iterative procedure implemented there. 

Clearly, $Q$ is reversible by construction. Moreover, $Q$ is normalized as we check in the next proposition. 

\begin{proposition}\label{prop: normalisation}
With above notations, we have 
$$
\max_{x\in \Omega}\{-Q(x,x)\} \leq 1.
$$
\end{proposition}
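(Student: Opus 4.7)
The plan is to compute $-Q(x,x)=\sum_{y\sim x}Q(x,y)$ directly and bound it by $1$. Write $I=\{i:x_i=1\}$, $J=\{i:x_i=0\}$ and fix an ordered admissible triple $(S,x,y)$. The key observation for the inner sum is a marginal property of the SCP coupling: for $s=s_{xy}\in J$ and $\bar{S}=[n]\setminus(S\cup\{s\})$, as $j$ varies over $I\setminus S$ the vectors $(y^{(s,j)})_{\bar{S}}=x_{\bar{S}}-e_j$ run over exactly the valid second coordinates paired with $x_{\bar{S}}$ in the support of $\kappa_S^s$. Hence $\sum_{j}\kappa_S^s(x_{\bar{S}},(y^{(s,j)})_{\bar{S}})=\mu(x_{\bar{S}}\mid\xi_S=x_S,\xi_s=0)$, and multiplying by $\Prob(\xi_s=0\mid\xi_S=x_S)$ (using $x_s=0$) collapses $\sum_{j\in I\setminus S}H_S^s(x,y^{(s,j)})$ to $\mu(x\mid\xi_S=x_S)\cdot\Prob(\xi_s=1\mid\xi_S=x_S)$. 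A symmetric computation handles the $H^{s_{yx}}_S(y,x)$-terms, and summing over $(i,j)\in(J\setminus S)\times(I\setminus S)$ both contributions match and yield
\[
\sum_{\substack{y\sim x\\ (S,x,y)\text{ adm}}}\frac{H^{s_{xy}}_S(x,y)+H^{s_{yx}}_S(y,x)}{\mu(x\mid\xi_S=x_S)}=2\rho_S,\qquad \rho_S:=\sum_{i\in J\setminus S}\E[\xi_i\mid\xi_S=x_S].
\]

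Plugging back and rewriting the sum over ordered $\ell$-subsets as an expectation under a uniform random permutation $\pi$ of $[n]$ gives
\[
\sum_{y\sim x}Q(x,y)=\frac{1}{k}\,\E_\pi\!\Big[\sum_{\ell=0}^{n-2}\mathbb{1}[\pi_{\ell+1}\in J]\,\E[\xi_{\pi_{\ell+1}}\mid\xi_{\pi_{1..\ell}}=x_{\pi_{1..\ell}}]\Big]=\frac{1}{k}\sum_{i\in J}P_i,
\]
where $P_i:=\E_\pi[\E[\xi_i\mid\xi_{S_i(\pi)}=x_{S_i(\pi)}]]$ and $S_i(\pi)$ denotes the set of coordinates preceding $i$ in $\pi$. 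Extending the inner sum to $\ell=n-1$ changes nothing, since conditioning on all but one coordinate forces $\xi_{\pi_n}=x_{\pi_n}$ and the added term vanishes for $i\in J$.

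The finish is the identity $\sum_{i\in[n]}P_i=k$; together with $P_i\geq 0$ it yields $\sum_{i\in J}P_i\leq k$ and hence $-Q(x,x)\leq 1$. Since each $i$ occupies a unique position in $\pi$, one has $\sum_{i\in[n]}P_i=\E_\pi[\sum_{\ell=0}^{n-1}p_\ell(\pi)]$ with $p_\ell(\pi):=\E[\xi_{\pi_{\ell+1}}\mid\xi_{\pi_{1..\ell}}=x_{\pi_{1..\ell}}]$. Grouping permutations by the unordered set $S=\{\pi_1,\ldots,\pi_\ell\}$ and the element $s=\pi_{\ell+1}$, $k$-homogeneity gives $\sum_{s\notin S}\E[\xi_s\mid\xi_S=x_S]=k-\|x_S\|_1$, and Pascal's identity yields $\sum_{|S|=\ell}(k-\|x_S\|_1)=k\binom{n-1}{\ell}$; a direct count then gives $\E_\pi[p_\ell]=k/n$ for every $\ell\in\{0,\ldots,n-1\}$, summing to $k$. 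The main obstacle is the first step: one must carefully align the indices $(i,j)$ appearing in the generator with the support of the couplings $\kappa_S^s$ in order to invoke their marginals; once this combinatorial bookkeeping is done, the bound reduces to a clean identity that in fact uses only $k$-homogeneity of $\mu$, as SCP has already been spent upstream to guarantee that $Q$ is well defined.
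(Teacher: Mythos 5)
Your proof is correct, and it takes a genuinely different route from the paper's. In the paper, the sum $\sum_{y\sim x}Q(x,y)$ is split termwise into the $H_S^{s_{xy}}$-part ($\alpha_\ell$) and the $H_S^{s_{yx}}$-part ($\beta_\ell$) and each is bounded \emph{separately by inequalities}: for $\alpha_\ell$ one enlarges the sum over $s\notin\supp x$ to all $s\in S^c$ (using $\xi_s\geq 0$), and for $\beta_\ell$ one uses the crude bound $\Prob(\xi_s=0\mid\xi_S=x_S)\leq 1$; a counting argument then gives the bound $\frac{n-1}{n}$. You instead observe an \emph{exact} identity: the two contributions for fixed $S$ are equal, since $k$-homogeneity forces $\sum_{i\in I\setminus S}\Prob(\xi_i=0\mid\xi_S=x_S) = |I\setminus S| - \sum_{i\in I\setminus S}\E[\xi_i\mid\xi_S=x_S] = (k-\Vert x_S\Vert_1) - \sum_{i\in I\setminus S}\E[\xi_i\mid\xi_S=x_S] = \rho_S$. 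Reinterpreting the $\ell$-sum as an expectation over a uniform random ordering is clean and makes the role of the factor $\frac{(n-1-\ell)!}{n!}$ transparent; the reduction to the identity $\sum_i P_i = k$ (equivalently $\E_\pi[p_\ell]=k/n$ for every $\ell$) isolates exactly where negativity/positivity enters, namely you just discard $\sum_{i\in I}P_i\geq 0$. Both proofs use $k$-homogeneity crucially, only at different spots, and both use the SCP coupling only via its first and second marginals. Your final bound is $1$ rather than the paper's $\frac{n-1}{n}$, but you could recover the sharper constant at no extra cost by noting that for $i\in I$ the $\ell=n-1$ term already forces $P_i\geq\frac{1}{n}$, hence $\sum_{i\in I}P_i\geq\frac{k}{n}$.

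Two small points of rigor worth spelling out in a final write-up. First, the identity ``$\sum_{j\in J\setminus S}\kappa_S^j(x_{\bar S},\cdot)=\mu(x_{\bar S}\mid\xi_S=x_S,\xi_j=0)$'' and its symmetric partner need the explicit observation that the support condition $u\succeq v$ of $\kappa_S^s$ means the admissible second arguments for a fixed $u$ are exactly $\{u-e_i:\,i\in\supp u\}$, and that $\supp(x_{\bar S})=I\setminus S$ when $s\in J$ (resp.\ that the admissible first arguments for a fixed $v=x_{\bar S'}$ are $\{x_{\bar S'}+e_j:\,j\in J\setminus S\}$ when $s\in I$); this is the bookkeeping you flag and it is correct, but it should appear in the argument. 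Second, you should make explicit that the ``matching'' of the two contributions to $2\rho_S$ genuinely uses $k$-homogeneity, as indeed the identity $\sum_{i\in I\setminus S}\Prob(\xi_i=0\mid\xi_S=x_S)=\rho_S$ is false without it; as stated the phrase ``both contributions match'' reads as if it followed from symmetry of the coupling alone.
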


\begin{proof}
Let $x\in \Omega$ and denote by $\supp x :=\{i\in \{1,\ldots, n\}:\, x_i=1\}$ its support. 
We start writing 
$$
-Q(x,x)=\sum_{y\sim x} Q(x,y)= \frac{1}{2k}\sum_{\ell=0}^{n-2} 
\frac{(n-1-\ell)!}{n!} (\alpha_\ell+\beta_\ell),
$$
where 
$$
\alpha_\ell:= \sum_{y\sim x}\, \sum_{\underset{(S,x,y) \text{ admissible}}{S:\, \vert S\vert=\ell}} 
\frac{H_S^{s_{xy}}(x,y)}{\mu(x\mid \xi_S=x_S)},
$$
and 
$$
\beta_\ell:= \sum_{y\sim x}\, \sum_{\underset{(S,x,y) \text{ admissible}}{S:\, \vert S\vert=\ell}} 
\frac{H_S^{s_{yx}}(y,x)}{\mu(x\mid \xi_S=x_S)}.
$$
We will estimate $\alpha_\ell$ and $\beta_\ell$ separately. 

Note that, for $x$ given, the collection of all admissible triples $(S,x,y)$ is in a one to one correspondence with admissible triples $(s,S, y_{\bar{S}})$ where $s\not\in \supp x$, $S\subset [n]\setminus \{s\}$ ordered set, and $y_{\bar{S}}$ a $0/1$ vector on $\bar{S}= (S\cup\{s\})^c$ satisfying $\Vert y_{\bar{S}}\Vert_1= \Vert x_{\bar{S}}\Vert_1-1$. To see this, given $y_{\bar{S}}$, note that one can uniquely define $y\sim x$ by concatenating $x_S$, $y_{\bar{S}}$ and setting $y_s=1$.  
In view of this, we can write 
$$
\alpha_\ell= \sum_{s\not\in \supp x} \, 
\sum_{\underset{S\, \text{ordered},\, \vert S\vert=\ell}{S\subset [n]\setminus \{s\}}}
\sum_{y_{\bar{S}}}
\frac{\kappa_S^{s}(x,y) \Prob(\xi_{s}=0\mid \xi_S=x_S)\Prob(\xi_{s}=1\mid \xi_S=x_S)}{\mu(x\mid \xi_S=x_S)}.
$$
Recalling that 
$$
\sum_{y_{\bar{S}}} \kappa_S^{s}(x,y)= \mu(x\mid \xi_S=x_S, \xi_s=0),
$$
we get 
\begin{align*}
\alpha_\ell&= \sum_{s\not\in \supp x} \, 
\sum_{\underset{S\, \text{ordered},\, \vert S\vert=\ell}{S\subset [n]\setminus \{s\}}} 
 \Prob(\xi_s=1\mid \xi_S=x_S)\\
 &\leq\sum_{\underset{S\, \text{ordered},\, \vert S\vert=\ell}{S\subset [n]}} \sum_{s \in S^c} \E[\xi_s\mid \xi_S=x_S]\\ 
 &= \sum_{\underset{S\, \text{ordered},\, \vert S\vert=\ell}{S\subset [n]}} \sum_{i\not\in S} x_i\\
 &= \sum_{i=1}^n \sum_{\underset{S\, \text{ordered},\, \vert S\vert=\ell}{S\subset [n]\setminus \{i\}}} x_i =   k \frac{(n-1)!}{(n-1-\ell)!}.
\end{align*}
where we used that $ \E[\sum_{s \in S^c}\xi_s\mid \xi_S=x_S]=\sum_{i\not\in S} x_i$ (due to the homogeneity of $\mu$). 

To estimate $\beta_\ell$, note that, given $x$, the collection of all admissible triples $(S,x,y)$ is in a one to one correspondence with all triples $(s,S, y_{\bar{S}})$ where $s\in \supp x$, $S\subset [n]\setminus \{s\}$ ordered set, and $y_{\bar{S}}$ a $0/1$ vector on $\bar{S}= (S\cup\{s\})^c$ satisfying $\Vert y_{\bar{S}}\Vert_1=\Vert x_{\bar{S}}\Vert_1$. To see this, given $y_{\bar{S}}$, note that one can uniquely define $y\sim x$ by concatenating $x_S$, $y_{\bar{S}}$ and setting $y_s=0$.   
Therefore, we have 
$$
\beta_\ell= \sum_{s\in \supp x} \, 
\sum_{\underset{S\, \text{ordered},\, \vert S\vert=\ell}{S\subset [n]\setminus \{s\}}}
\sum_{y_{\bar{S}}}
\frac{\kappa_S^{s}(y,x) \Prob(\xi_{s}=0\mid \xi_S=x_S)\Prob(\xi_{s}=1\mid \xi_S=x_S)}{\mu(x\mid \xi_S=x_S)}.
$$
Recalling that 
$$
\sum_{y_{\bar{S}}} \kappa_S^{s}(y,x)= \mu(x\mid \xi_S=x_S, \xi_s=1),
$$
we get 
$$
\beta_\ell= \sum_{s\in \supp x} \, 
\sum_{\underset{S\, \text{ordered},\, \vert S\vert=\ell}{S\subset [n]\setminus \{s\}}} 
 \Prob(\xi_s=0\mid \xi_S=x_S)\\
 \leq k \frac{(n-1)!}{(n-1-\ell)!}, 
$$
where we used the trivial bound $\Prob(\xi_s=0\mid \xi_S=x_S)\leq 1$. 

Putting together the estimates on $\alpha_\ell$ and $\beta_\ell$, we deduce that 
$$
-Q(x,x)\leq \frac{n-1}{n}\leq 1, 
$$
and finish the proof.
\end{proof}

Before we proceed with the proof of the matrix Poincar\'e inequality promised in the introduction, we need the following identity which we interpret as a two state matrix Poincar\'e. 
\begin{lemma}\label{lem: two-state}
Let $\pi$ be a probability measure on $\{0,1\}$ and $\widetilde Q$ be a reversible Markov generator. Then for any $f: \{0,1\}\to \sym$, we have 
$$
\Var_\pi(f)= \frac{1}{\widetilde Q(0,1)+ \widetilde Q(1,0)} \Dir(f,f).
$$
\end{lemma}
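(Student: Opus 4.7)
The plan is a direct finite-state computation. Set $p=\pi(0)$, $q=\pi(1)=1-p$, and denote $a=f(0), b=f(1)\in\sym$. First I would compute the variance explicitly. Since $\E_\pi f = pa+qb$, expanding $(pa+qb)^2 = p^2a^2 + pq\,ab + pq\,ba + q^2 b^2$ (being careful not to assume $a,b$ commute) and subtracting from $pa^2+qb^2$ gives
\[
\Var_\pi(f) = p(1-p)a^2 + q(1-q)b^2 - pq(ab+ba) = pq(a-b)^2,
\]
where in the last step I use $p(1-p)=q(1-q)=pq$ and regroup the cross terms into $(a-b)^2$.

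Next I would compute the Dirichlet form using the finite-state formula from Proposition~\ref{prop: dirichlet properties}(4):
\[
\Dir(f) = \frac{1}{2}\Big[\pi(0)\widetilde Q(0,1)(b-a)^2 + \pi(1)\widetilde Q(1,0)(a-b)^2\Big] = \frac{1}{2}\big(p\widetilde Q(0,1)+q\widetilde Q(1,0)\big)(a-b)^2.
\]
Reversibility of $\widetilde Q$ with respect to $\pi$ gives the detailed balance $p\widetilde Q(0,1) = q\widetilde Q(1,0)$. Writing this common value as $\tau$, we get $p\widetilde Q(0,1)+q\widetilde Q(1,0) = 2\tau$, while at the same time $\widetilde Q(0,1)=\tau/p$ and $\widetilde Q(1,0)=\tau/q$, so $\widetilde Q(0,1)+\widetilde Q(1,0) = \tau(p+q)/(pq) = \tau/(pq)$. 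Consequently
\[
\Dir(f) = \tau\,(a-b)^2 = pq\bigl(\widetilde Q(0,1)+\widetilde Q(1,0)\bigr)(a-b)^2 = \bigl(\widetilde Q(0,1)+\widetilde Q(1,0)\bigr)\Var_\pi(f),
\]
which is the claimed identity after dividing through.

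There is no real obstacle here; the only mild subtlety is that since $a,b$ do not commute in general, one must treat $(a-b)^2 = a^2-ab-ba+b^2$ carefully in the expansion of the variance, but this is precisely what makes $pq(a-b)^2$ the correct closed form. The reversibility is used only to collapse the two boundary terms $p\widetilde Q(0,1)$ and $q\widetilde Q(1,0)$ into a single quantity proportional to $pq\bigl(\widetilde Q(0,1)+\widetilde Q(1,0)\bigr)$.
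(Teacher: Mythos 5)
Your proposal is correct and follows the same route as the paper: compute $\Var_\pi(f)=\pi(0)\pi(1)(f(0)-f(1))^2$, compute $\Dir(f)$ from the finite-state formula, and use detailed balance to relate $\pi$ to $\widetilde Q$. The only cosmetic difference is that you introduce $\tau:=p\widetilde Q(0,1)=q\widetilde Q(1,0)$ as an intermediate quantity, whereas the paper writes $\pi(1)=\widetilde Q(0,1)/(\widetilde Q(0,1)+\widetilde Q(1,0))$ directly; both are the same algebra.
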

\begin{proof}
First note that 
$$
\Var_\pi(f)= \pi(0)\big(f(0)- \E_\pi[f]\big)^2+ \pi(1)\big(f(1)- \E_pi[f]\big)^2
=\pi(0)\pi(1) \big(f(0)-f(1)\big)^2. 
$$
On the other hand, using the reversibility of $\widetilde Q$, we can write
$$
\Dir(f,f)= \pi(0)\widetilde Q(0,1) \big(f(0)-f(1)\big)^2.
$$
Also, by reversibility of $\widetilde Q$, it is easy to see that  $\pi(1)= \frac{\widetilde Q(0,1)}{\widetilde Q(0,1)+ \widetilde Q(1,0)}$. This finishes the proof. 
\end{proof}

\begin{proof}[Proof of Theorem~\ref{th: strong rayleigh}]
We will show that $\mu$ satisfies a matrix Poincar\'e inequality with constant $2k$ with respect to the Markov generator defined in \eqref{eq: def-generator-scp}. We showed in Proposition~\ref{prop: normalisation} that $Q$ is normalized, and it remains to show that for any $f: \Omega \to \sym$, we have 
$$
\Var_\mu(f) \preceq 2k \Dir(f,f).
$$
First, by Proposition \ref{prop: dirichlet properties}, 


 $$
2k \Dir(f,f) = 
\sum_{\ell=0}^{n-2}\frac{(n-1-\ell)!}{n!} \sum_{x\sim y} 
\sum_{\underset{(S,x,y) \text{ admissible}}{S:\, \vert S\vert=\ell}} 
\Prob(\xi_S=x_S)\, H_S^{s_{xy}}(x,y)\, \big(f(x)-f(y)\big)^2,
$$
where we have used the reversibility of $Q$ to simplify the expression. 
Note that the collection of all admissible triples $(S,x,y)$ 
is in one to one correspondence with all quintuples $(s,S,x_S,x_{\bar{S}},y_{\bar{S}})$,  where $\bar{S}:=(S\cup\{s\})^c$, $s\not\in S$, $x_{\bar{S}}\succeq y_{\bar{S}} \text{ and } \Vert x_{\bar{S}}\Vert_1=k-\Vert x_S\Vert_1$. To see this, note that 
if $(S,x,y)$ is admissible then $x_S=y_S$. Moreover, given 
$(s,S,x_S,x_{\bar{S}},y_{\bar{S}})$, it is possible to uniquely reconstruct $x$ (resp.~$y$) by concatenating $x_S$ and $x_{\bar{S}}$ (resp.~$x_S$ and $y_{\bar{S}}$) and setting $x_s=0$ (resp.~$y_s=1$). In the sequel, given an admissible quintuple $(s,S,x_S,x_{\bar{S}},y_{\bar{S}})$, $x$ and $y$ refer to the vectors constructed as we just described. In view of this, for any $\ell=0,\ldots, n-2$, we can write 
\begin{align*}
    \gamma_\ell&:= \sum_{x\sim y} 
\sum_{\underset{(S,x,y) \text{ admissible}}{S:\, \vert S\vert=\ell}} 
\Prob(\xi_S=x_S)\, H_S^{s_{xy}}(x,y)\, \big(f(x)-f(y)\big)^2\\
&=\sum_{s=1}^n \sum_{\underset{S\, \text{ordered},\, \vert S\vert=\ell}{S\subset [n]\setminus \{s\}}} 
\sum_{x_S} \Prob(\xi_s=0\mid\xi_S=x_S)\,\Prob(\xi_s=1,\, \xi_S=x_S)\sum_{(x_{\bar{S}},y_{\bar{S}})}  \kappa_S^{s}(x,y)\, \big(f(x)-f(y)\big)^2. 
\end{align*}
Since the square is operator convex and $\kappa_S^{s}$ is a probability measure on $\bar{S}\times\bar{S}$, then using Jensen's inequality we get 
$$
\gamma_\ell \succeq 
\sum_{s=1}^n \sum_{\underset{S\, \text{ordered},\, \vert S\vert=\ell}{S\subset [n]\setminus \{s\}}} 
\sum_{x_S} \Prob(\xi_s=0\mid\xi_S=x_S)\,\Prob(\xi_s=1,\, \xi_S=x_S) \big( f_{(s,S,x_S)}(0)-f_{(s,S,x_S)}(1)\big)^2,
$$
where $f_{(s,S,x_S)}: \{0,1\}\to \sym$ is defined by 
$$
f_{(s,S,x_S)}(0)= \sum_{(x_{\bar{S}},y_{\bar{S}})}\kappa_S^{s}(x,y) \, f(x)= 
\sum_{x_{\bar{S}}} \mu(x\mid \xi_s=0, \xi_S=x_S) f(x) = \E[ f(\xi)\mid \xi_s=0, \xi_S=x_S],
$$
and 
$$
f_{(s,S,x_S)}(1)= \sum_{(x_{\bar{S}},y_{\bar{S}})}\kappa_S^{s}(x,y) \, f(y)= 
\sum_{y_{\bar{S}}} \mu(y\mid \xi_s=1, \xi_S=x_S) f(x) = \E[ f(\xi)\mid \xi_s=1, \xi_S=x_S].
$$
Now for a given triple $(s,S,x_S)$, define a probability measure $\pi$ on $\{0,1\}$ by $\pi(0)= \Prob(\xi_s=0\mid\xi_S=x_S)$ and $\pi(1)=\Prob(\xi_s=1\mid\xi_S=x_S)$. Moreover, define a reversible Markov generator $\widetilde Q$ by $\widetilde Q (0,1)= \Prob(\xi_s=1,\, \xi_S=x_S)$. 
On the one hand,  
\begin{align*}
\Dir(f_{(s,S,x_S)},f_{(s,S,x_S)})&=\pi(0) \pi(1) \widetilde{Q}(0,1)  \big( f_{(s,S,x_S)}(0)-f_{(s,S,x_S)}(1)\big)^2\\
&=\Prob(\xi_s=0\mid\xi_S=x_S)\Prob(\xi_s=1,\, \xi_S=x_S) 
 \big( f_{(s,S,x_S)}(0)-f_{(s,S,x_S)}(1)\big)^2.\end{align*}
 On the other hand, by the two-state matrix Poincar\'e inequality (Lemma~\ref{lem: two-state}), 
$$\Dir(f_{(s,S,x_S)},f_{(s,S,x_S)})=\big(\widetilde{Q}(0,1)+\widetilde{Q}(1,0)\big)\Var_\pi(f_{(s,S,x_S)})= \Prob(\xi_S=x_S)\Var_\pi(f_{(s,S,x_S)}).$$
Thus we get 
$$
\Prob(\xi_s=0\mid\xi_S=x_S)\Prob(\xi_s=1,\, \xi_S=x_S) 
 \big( f_{(s,S,x_S)}(0)-f_{(s,S,x_S)}(1)\big)^2
 = \Prob(\xi_S=x_S)\Var_\pi(f_{(s,S,x_S)}), 
$$
which when replaced in the expression of $\gamma_\ell$ yields to 
$$
\gamma_\ell 
\succeq \sum_{s=1}^n \sum_{\underset{S\, \text{ordered},\, \vert S\vert=\ell}{S\subset [n]\setminus \{s\}}} 
\sum_{x_S} \Prob(\xi_S=x_S)\Var_\pi(f_{(s,S,x_S)}).
$$
Now note that 
$$
\E_\pi [f_{(s,S,x_S)}]= \E [f(\xi)\mid \xi_S=x_S],
$$
and 
\begin{align*}
 \sum_{x_S} \Prob(\xi_S=x_S)\E_\pi [f_{(s,S,x_S)}^2]&= 
  \sum_{x_S} \Prob(\xi_s=0, \xi_S=x_S) \big(\E[ f(\xi)\mid \xi_s=0, \xi_S=x_S]\big)^2 \\ 
  & \qquad +  \sum_{x_S} \Prob(\xi_s=1, \xi_S=x_S) \big(\E[ f(\xi)\mid \xi_s=1, \xi_S=x_S]\big)^2\\
&= \sum_{x_{S\cup\{s\}}} \Prob(\xi_{S\cup\{s\}}=x_{S\cup\{s\}}) \big(\E [f(\xi)\mid  \xi_{S\cup\{s\}}=x_{S\cup\{s\}}]\big)^2.  
\end{align*}

Putting together these identities, we get
\begin{align*}
    \gamma_\ell 
&\succeq \sum_{s=1}^n \sum_{\underset{S\, \text{ordered},\, \vert S\vert=\ell}{S\subset [n]\setminus \{s\}}} 
\sum_{x_{S\cup\{s\}}} \Prob(\xi_{S\cup\{s\}}=x_{S\cup\{s\}}) \big(\E [f(\xi)\mid  \xi_{S\cup\{s\}}=x_{S\cup\{s\}}]\big)^2\\
& \qquad\qquad -\sum_{s=1}^n \sum_{\underset{S\, \text{ordered},\, \vert S\vert=\ell}{S\subset [n]\setminus \{s\}}} 
\sum_{x_S} \Prob(\xi_S=x_S) \big(\E [f(\xi)\mid  \xi_{S}=x_{S}]\big)^2\\
&= \sum_{\underset{S\, \text{ordered},\, \vert S\vert=\ell+1}{S\subset [n]}} 
\sum_{x_S} \Prob(\xi_S=x_S) \big(\E [f(\xi)\mid  \xi_{S}=x_{S}]\big)^2 \\
&\qquad \qquad-(n-\ell) \sum_{\underset{S\, \text{ordered},\, \vert S\vert=\ell}{S\subset [n]}} 
\sum_{x_S} \Prob(\xi_S=x_S) \big(\E [f(\xi)\mid  \xi_{S}=x_{S}]\big)^2.
\end{align*}
In view of this, the sum involving $\gamma_\ell$ is a telescopic sum, yielding to 
\begin{align*}
2k\Dir(f,f)&= 
\sum_{\ell=0}^{n-2} \frac{(n-1-\ell)!}{n!} \gamma_\ell \\
&\succeq  \frac{1}{n!}\sum_{\underset{S\, \text{ordered},\, \vert S\vert=n-1}{S\subset [n]}} 
\sum_{x_S} \Prob(\xi_S=x_S) \big(\E [f(\xi)\mid  \xi_{S}=x_{S}]\big)^2  - \big(\E [f(\xi)]\big)^2.   
\end{align*}
Finally, note that by homogeneity, fixing $n-1$ coordinates automatically determines the remaining coordinate. Therefore, 
for any ordered set $S$ of size $n-1$, we have 
$$
\sum_{x_S} \Prob(\xi_S=x_S) \big(\E [f(\xi)\mid  \xi_{S}=x_{S}]\big)^2= \E [f^2(\xi)],
$$
which when replaced in the previous inequality finishes the proof.
\end{proof}

Finally, we end the section by showing how to derive the concentration inequality stated in Theorem~\ref{th: SR-conc}.

\begin{proof}[Proof of Theorem~\ref{th: SR-conc}]
The proof will follow by combining Theorem~\ref{th: poincare to concentration} and Theorem~\ref{th: strong rayleigh}. 
First, using Proposition~\ref{prop: dirichlet properties},  we have for any $f:\Omega\to \sym$ and any $x\in \Omega$
$$
\Gamma(f)(x)= \frac12 \sum_{y\in \Omega} Q(x,y) \big(f(x)-f(y)\big)^2,  
$$
where $Q$ is the Markov generator defined in \eqref{eq: def-generator-scp}. 
Note that if $f$ is $1$-Lipschitz in the sense of Theorem~\ref{th: SR-conc}, then $\Vert f(x)-f(y)\Vert \leq 2$ for any $x\sim y$. Using this together with the triangular inequality, we deduce that for any $1$-Lipschitz matrix function $f$, we have  
$$
\Vert \Gamma(f)(x) \Vert \leq 2, 
$$
for any $x\in \Omega$. 
Replacing this estimate in Theorem~\ref{th: poincare to concentration}  together with the value of the matrix Poincar\'e constant from Theorem~\ref{th: strong rayleigh}, we finish the proof.
\end{proof}

 \nocite{*}

\bigskip
\bigskip

\noindent {\small Richard Aoun,}\\
{\small American University of Beirut, Department of Mathematics, Faculty of Arts and Sciences, P.O. Box 11-0236 Riad El Solh, Beirut 1107 2020, Lebanon,}\\
{\small \it E-mail: ra279@aub.edu.lb}

\bigskip

\noindent {\small Marwa Banna,}\\
{\small Saarland University, Fachbereich Mathematik, 66041 Saarbr\"ucken, Germany}\\
{\small \it E-mail: banna@math.uni-sb.de}

\bigskip

\noindent {\small Pierre Youssef,}\\
{\small Laboratoire de Probabilit\'es, Statistique et Mod\'elisation, 
Universit\'e Paris Diderot, France \\ And \\ Mathematics, Division of Science, New York University Abu Dhabi, UAE}\\
{\small \it E-mail: youssef@lpsm.paris}

\end{document}